\documentclass[12pt]{amsart}
\usepackage{amsfonts}
\usepackage{bbm}
\usepackage{esint}
\usepackage{mathrsfs}
\usepackage{amssymb,amsmath,amsfonts,amsthm}
\usepackage[colorlinks, citecolor=blue,  hypertexnames=false]{hyperref}

\textwidth =166mm \textheight =225mm \oddsidemargin 2mm
\evensidemargin 2mm \headheight=13pt \setlength{\topmargin}{-0.3cm}

\numberwithin{equation}{section}

\allowdisplaybreaks

\begin{document}

\baselineskip 16.1pt \hfuzz=6pt

\theoremstyle{plain}
\newtheorem{theorem}{Theorem}[section]
\newtheorem{prop}[theorem]{Proposition}
\newtheorem{lemma}[theorem]{Lemma}
\newtheorem{corollary}[theorem]{Corollary}
\newtheorem{example}[theorem]{Example}
\newtheorem*{thmA}{Theorem 1}
\newtheorem*{thmB}{Theorem 2}
\newtheorem*{thmC}{Theorem C}
\newtheorem*{thmD}{Theorem D}
\newtheorem*{thmE}{Theorem E}
\newtheorem*{thmF}{Theorem F}
\newtheorem*{thmG}{Theorem G}
\newtheorem*{thmH}{Theorem H}

\newtheorem*{defnA}{Definition A}
\newtheorem*{defnB}{Definition B}
\newtheorem*{defnC}{Definition C}
\newtheorem*{defnD}{Definition D}
\newtheorem*{defnE}{Definition E}

\theoremstyle{definition}
\newtheorem{definition}[theorem]{Definition}
\newtheorem{remark}[theorem]{Remark}

\renewcommand{\theequation}
{\thesection.\arabic{equation}}

\allowdisplaybreaks

\newcommand{\XX}{X}
\newcommand{\GG}{\mathop G \limits^{    \circ}}
\newcommand{\GGs}{{\mathop G\limits^{\circ}}}
\newcommand{\GGtheta}{{\mathop G\limits^{\circ}}_{\theta}}
\newcommand{\xoneandxtwo}{X_1\times\mathcal X_2}

\newcommand{\GGp}{{\mathop G\limits^{\circ}}}

\newcommand{\GGpp}{{\mathop G\limits^{\circ}}_{\theta_1,\theta_2}}

\newcommand{\e}{\varepsilon}
\newcommand{\bmo}{{\rm BMO}}
\newcommand{\vmo}{{\rm VMO}}
\newcommand{\cmo}{{\rm CMO}}
\newcommand{\Z}{\mathbb{Z}}
\newcommand{\N}{\mathbb{N}}
\newcommand{\R}{\mathbb{R}}
\newcommand{\C}{\mathbb{C}}

\pagestyle{myheadings}\markboth{\rm\small Yanchang Han, Yongsheng Han and  Ji Li
}{\rm\small Embedding theorems space of homogeneous type}

\title[Embedding theorems ]
{Geometric characterizations of embedding theorems }

\author{Yanchang Han, Yongsheng Han and  Ji Li}


\subjclass[2010]{Primary 42B35; Secondary 43A85, 42B25, 42B30, 46E35}

\keywords{Spaces of homogeneous type, embedding, orthonormal basis, test
function space, distributions, Besov space, Sobolev space, Triebel-Lizorkin space. }

\begin{abstract}
The embedding theorem arises in several problems from analysis and geometry. The purpose of this paper is to provide a deeper understanding of analysis and geometry with a particular focus on embedding theorems on spaces of homogeneous type in the sense of Coifman and Weiss. We prove that embedding theorems hold on spaces of homogeneous type if and only if geometric conditions, namely the measures of all balls have lower bounds, hold. As applications, our results provide new and sharp previous related embedding theorems for the Sobolev, Besov and Triebel-Lizorkin spaces.
\end{abstract}
\maketitle

\section{Introduction }\label{sec:introduction}
\setcounter{equation}{0}

The purpose of this paper is to provide a deeper understanding of analysis and geometry with a particular focus on embedding theorems on spaces of homogeneous type which were introduced by Coifman and Weiss in the early 1970s, in \cite{CW1}. The original motivation to introduce spaces of homogeneous type is to carry out the Caldr\'on-Zygmund theory on locally compact abelian groups to a general geometric framework. Spaces of homogeneous type in the sense of Coifman and Weiss have been become a standard setting for harmonic analysis related to maximal function, differentiation theorem, singular integrals, function spaces such as Hardy spaces and functions of bounded mean oscillation, and many others. As Meyer remarked in his
preface to~\cite{DH}, \emph{``One is amazed by the dramatic
changes that occurred in analysis during the twentieth century.
In the 1930s complex methods and Fourier series played a
seminal role. After many improvements, mostly achieved by the
Calder\'on--Zygmund school, the action takes place today on
spaces of homogeneous type. No group structure is available,
the Fourier transform is missing, but a version of harmonic
analysis is still present. Indeed the geometry is conducting
the analysis.''}

We say that $(X,d,\mu)$ is {\it a space of homogeneous type} in the
sense of Coifman and Weiss if $d$ is a quasi-metric on~$X$
and $\mu$ is a nonzero measure satisfying the doubling
condition. More precisely, a \emph{quasi-metric}~$d$ on a set~$X$ is a
function $d:X\times X\rightarrow[0,\infty)$ satisfying
(i) $d(x,y) = d(y,x) \geq 0$ for all $x,y\in X$; (ii)
$d(x,y) = 0$ if and only if $x = y$; and (iii) the
\emph{quasi-triangle inequality}: there is a constant $A_0\in
[1,\infty)$ such that for all $x$, $y$, $z\in X$,
\begin{eqnarray}\label{eqn:quasitriangleineq}
    d(x,y)
    \leq A_0 [d(x,z) + d(z,y)].
\end{eqnarray}
We define the quasi-metric ball by $B(x,r) := \{y\in X: d(x,y)
< r\}$ for $x\in X$ and $r > 0$.
We say that a nonzero measure $\mu$ satisfies the
\emph{doubling condition} if there is a constant $C_\mu$ such
that for all $x\in\XX$ and $r > 0$,
\begin{eqnarray}\label{doubling condition}
  \mu(B(x,2r))
   \leq C_\mu \mu(B(x,r))
   < \infty.
\end{eqnarray}

We point out that the doubling condition (\ref{doubling condition}) implies that there exist positive constants
$\omega$ (the \emph{upper dimension} of~$\mu$) and $C$ such
that for all $x\in X$, $\lambda\geq 1$ and $r > 0$,
\begin{eqnarray}\label{upper dimension}
    \mu(B(x, \lambda r))
    \leq C\lambda^{\omega} \mu(B(x,r)).
\end{eqnarray}

Note that there is no differentiation structure on spaces of homogeneous type and even the original quasi-metric~$d$ may have no any regularity and quasi-metric balls, even Borel sets, may not be open. By the end of the 1970s, it was well recognized that much contemporary real analysis requires little structure on the underlying space. For instance, to obtain the maximal function characterizations for the Hardy spaces on spaces of homogeneous type, Mac\'ias and Segovia proved in~\cite{MS1} that one can
replace the quasi-metric~$d$ by another quasi-metric $d'$
on~$X$ such that the topologies induced on~$\XX$ by $d$
and~$d'$ coincide, and $d'$ has the following regularity
property:
\begin{eqnarray}\label{smooth metric}
    |d'(x,y) - d'(x',y)|
    \le C_0 \, d'(x,x')^\theta \,
        [d'(x,y) + d'(x',y)]^{1 - \theta}
\end{eqnarray}
for some constant~$C_0,$ some regularity exponent
$\theta\in(0,1)$, and for all $x$, $x'$, $y\in X$. Moreover, if
quasi-metric balls are defined by this new quasi-metric $d'$,
that is, $B'(x,r) := \{y\in X: d'(x,y) < r\}$ for $r > 0$, then
the measure $\mu$ satisfies the following property:
\begin{eqnarray}\label{regular}
    \mu(B'(x,r))\sim r.
\end{eqnarray}
Note that property~\eqref{regular} is much stronger than the
doubling condition. Mac\'{i}as and Segovia established the
maximal function characterization for Hardy spaces $H^p(X)$
with $(1 + \theta)^{-1} < p \leq 1$, on spaces of homogeneous
type~$(X,d',\mu)$ that satisfy the regularity
condition~\eqref{smooth metric} on the quasi-metric~$d'$ and
property~\eqref{regular} on the measure~$\mu$; see~\cite{MS2}.

The seminal work on these spaces $(X, d', \mu)$ that satisfy the regularity
condition~\eqref{smooth metric} on the quasi metric~$d'$ and property~\eqref{regular} on the measure~$\mu$ is the $T(b)$ theorem of David--Journ\'{e}--Semmes~\cite{DJS}. The
crucial tool in the proof of the $T(b)$ theorem is the
existence of a suitable approximation to the identity provided by these geometric conditions given in ~\eqref{smooth metric} and ~\eqref{regular}. The construction of such an approximation to the identity is due to
Coifman. More precisely, $S_k(x,y),$ the kernel of the approximation to the identity ~$S_k$, satisfies the
following conditions: for some constants $C > 0$ and $\e > 0$,
\begin{eqnarray*}
    &\textup{(i)}& S_k(x,y) = 0 {\rm\ for\ } d'(x,y) \geq C2^{-k},
        {\rm\ and\ } \| S_k\|_{\infty} \leq C2^k,\\
    &\textup{(ii)}& |S_k(x,y)-S_k(x',y)|
        \leq C2^{k(1+\e)}d'(x,x')^{\e},\\
    &\textup{(iii)}& |S_k(x,y)-S_k(x,y')|
        \leq C2^{k(1+\e)}d'(y,y')^{\e}, \quad\text{and}\\
    &\textup{(iv)}& \int_{X}S_k(x,y) \, d\mu(y)
        = 1
        = \int_{X}S_k(x,y) \, d\mu(x).
\end{eqnarray*}

Let $D_k := S_{k+1} - S_k$. In \cite{DJS}, the
Littlewood--Paley theory for $L^p(X)$, $1 < p < \infty$, was
established; namely, if $\mu(X) = \infty$ and $\mu(B(x,r))>0$
for all $x\in X$ and $r>0,$ then for each $p$ with $1 < p <
\infty$ there exists a positive constant $C_p$ such that
\[
    C_p^{-1}\|f\|_p
    \leq \big\|\big\{\sum_{k}|D_k(f)|^2\big\}^{\frac{1}{2}}\big\|_p
    \leq C_p\| f\|_p.
\]
The above estimates were the key tool for proving the
$Tb$~theorem on $(X, d', \mu)$; see \cite{DJS} for more
detail. Later, almost all important results on Euclidean space such as the Calder\'on reproducing formula, test
function spaces and distributions, the Littlewood--Paley
theory, function spaces and the embedding theorems were carried over on $(X, d', \mu)$ that satisfy the regularity
condition~\eqref{smooth metric} on the quasi-metric~$d'$ and
property~\eqref{regular} on the measure~$\mu.$ See
\cite{H1,H2}, \cite{HS}, \cite{HL} and \cite{DH} for more
details.

To devote to the so-called first order calculus, systematic theories on metric measure spaces were developed since the end of the 1970s, see, for example, \cite{Ch, HK, He1, He2}. Note that metric measure spaces are spaces of homogeneous type in the sense of Coifman and Weiss. For instance, Ahlfors $\omega$-regular space $(X, d, \mu)$ is such a metric measure space with $r^\omega/C\leq \mu(B(x,r))\leq Cr^\omega$ for some $\omega>0$ and $C$ independent of $x$ and $0<r\leq \displaystyle\sup_{x, y \in X} d (x, y).$ Ahlfors regular spaces are closely related to many questions such as complex analysis, singular integrals, the estimates of the bounds for heat kernels. See \cite{A}, \cite{ARSW}, \cite{BCG} and \cite{MMV} for more details.

Another important metric measure space is the Carnot-Carath\'edory space. There are several equivalent definitions for the Carnot-Carath\'edory distance, see \cite{JS} and \cite{NSW}. Here we mention Nagel and Stein's work in \cite{NS} on the Carnot-Carath\'edory space. Let $M$ be a connected smooth manifold
and $\{\mathbb{X}_1, \cdots, \mathbb{X}_k\}$ are $k$ given smooth
real vector fields on $M$ satisfying H\"{o}rmander
condition of order $m$, that is, these vector fields together with
their commutators of order $\leq m$ span the tangent space to $M
$ at each point. The most important geometric objects
used on the Carnot-Carath\'edory spaces are (i) a class of equivalent control
distances constructed on $M$ via the vector fields
$\{\mathbb{X}_1, \cdots, \mathbb{X}_k\};$ (ii) the volumes of balls
satisfying the doubling property and the certain lower bound estimates. More precisely, $\mu(B(x, sr)) \sim
s^{m+2} \mu(B(x,r))$ for $s\geq 1$ and $\mu(B(x, sr)) \sim
s^4\mu(B(x,r))$ for $s\leq 1.$ These conditions on the measure
are weaker than property~\eqref{regular} but are still stronger
than the original doubling condition~\eqref{doubling
condition}. The Carnot-Carath\'edory spaces, as natural model geometries, are closely related to hypoelliptic partial differential equations, subelliptic operators, CR geometry and quasiconformal mapping. See \cite{FP}, \cite{F}, \cite{FL}, \cite{GN1, GN2}, \cite{K}, \cite{NSW}, \cite{SC} and \cite{VSC}.

In~\cite{HMY1}, motivated by the work of Nagel and
Stein, the Hardy spaces were developed on spaces of homogeneous
type with a regular quasi-metric and a measure satisfying the
above conditions. Moreover, in \cite{HMY2} singular integrals and the Besov and Triebel-Lizorkin spaces were also developed on spaces of homogeneous type $(X, d, \mu)$ where the quasi-metric $d$ satisfies the H\"older regularity
property~\eqref{smooth metric} and the measure $\mu$ satisfies
the doubling property together with the reverse doubling condition, that
is, there are constants $\kappa \in (0, \omega]$ and $c \in (0, 1]$ such
that
\begin{eqnarray}\label{reversed doubling condition}
c \lambda^\kappa \mu ( B (x, r) ) \leq \mu ( B (x, \lambda r) )
\end{eqnarray}
for all $x \in X$, $0 < r < \displaystyle\sup_{x, y \in X} d (x, y) / 2$ and $1
\leq \lambda < \displaystyle\sup_{x, y \in X} d (x, y) / 2r $.

Recently, in \cite{GKZ} and \cite{HLW}, by different approaches, the Besov and the Triebel-Lizorkin spaces on metric measure spaces $(X, d, \mu)$ with the measure $\mu$ satisfying doubling property only and the Hardy spaces on spaces of homogeneous type in the sense of Coifman and Weiss were developed, respectively.

However, whether the most important embedding theorems can be established on spaces of homogeneous type in the sense of Coifman and Weiss is still an open problem. Even this is open whenever $(X,d,\mu)$ is a metric measure space with the measure $\mu$ satisfying the doubling together with the reverse doubling conditions.

The goal of this paper is to answer these problems.  Throughout the rest of the paper, we will work on the space of homogeneous type $(X,d,\mu)$ in the sense of Coifman and Weiss with $\mu(\{x\})=0$ and $0<\mu(B(x,r))<\infty$ for all $x\in X$ and $r>0.$
To characterize the embedding theorem on spaces of homogeneous type $(X,d,\mu),$ the crucial geometric condition is  the following
\begin{definition}\label{density}
Suppose that $(X,d,\mu)$ is a space of homogeneous type in the sense of Coifman and Weiss with the upper dimension $\omega.$
The measure $\mu$ is said to have a lower bound if there is a constant $C$ such that
$$\mu(B(x,r))\geq C r^{\omega}$$
for each $x\in X$ and all $r>0.$ And $\mu$ has a locally lower bound if
$$\mu(B(x,r))\geq C r^{\omega}$$
for each $x\in X$ and all $0<r\leq 1.$
\end{definition}
We remark that the lower bound conditions on the measure were used in the classical cases. To see this, let $(M,g)$ be a complete non-compact Riemannian manifold of dimension $n$ having non-negative curvature. Let $\mu$ be the canonical Riemannian measure on $M$ and denote by $V(x,r)$ the volume of the ball of radius $r>0$ centered at $x\in M$, i.e., $V(x,r)=\mu(B(x,r))$.
Note also that any smooth $(n-1)$-submanifold (i.e., hypersurface of co-dimension 1) inherits a Riemannian measure which we will denote by $\mu_{n-1}$.
It is well known that
from the celebrated Bishop--Gromov comparison theorem (see, e.g., \cite{C}), we have $V(x,2r)\leq 2^n V(x,r)$.
In this setting, the measure with lower bound condition is equivalent to the Sobolev-type inequality and related to the isoperimetric inequality and Poincar\'e's inequality. See Theorem 3.1.1 and Theorem 3.1.2 in \cite{SC} and see also \cite{CKP}.
In \cite{HKT}, it was proved that if the Sobolev embedding theorem holds in $\Omega\subset R^n,$ in any of all the possible cases, then $\Omega$ satisfies the
measure density condition, i.e. there exists a constant $c>0$ such that for all $x\in \Omega$ and all $0<r\leq 1$
$$|B(x,r)\cap \Omega|\geq cr^n.$$
In Section 2, we will show the main result of this paper. Applications of this main result will be given in the last section.

\section{Embedding Theorem}
\setcounter{equation}{0}

Before stating our main result, we recall
the following remarkable orthonormal wavelet basis which was constructed recently by Auscher and Hyt\"onen in \cite{AH}.
\begin{thmA}[\cite{AH} Theorem 7.1, \cite{HLW} Theorem 2.9 and Corollary 2.10]\label{theorem AH orth basi}
    Let $(X,d,\mu)$ be a space of homogeneous type in the sense of Coifman and Weiss with
    quasi-triangle constant $A_0.$ There exists an orthonormal wavelet basis
    $\{\psi_\alpha^k\}$, $k\in\mathbb{Z}$, $x_\alpha^k\in
    \mathscr{Y}^k$, of $L^2(X)$, having exponential decay
    \begin{eqnarray}\label{exponential decay}
        |\psi_\alpha^k(x)|
        \leq {C\over \sqrt{\mu(B(x_\alpha^k,\delta^k))}}
            \exp\Big(-\nu\Big( {d(x^k_\alpha,x)\over\delta^k}\Big)^a\Big),
    \end{eqnarray}
    H\"older regularity
    \begin{eqnarray}\label{Holder-regularity}
        |\psi_\alpha^k(x)-\psi_\alpha^k(y)|
        \leq \frac{C}{\sqrt{\mu(B(x_\alpha^k,\delta^k))}}
            \Big( \frac{d(x,y)}{\delta^k}\Big)^\eta
            \exp\Big(-\nu\Big( \frac{d(x^k_\alpha,x)}{\delta^k}\Big)^a\Big)
    \end{eqnarray}
    for $d(x,y)\leq \delta^k$, and the cancellation property
    \begin{eqnarray}\label{cancellation}
        \int_X \psi_\alpha^k(x)\,d\mu(x) = 0,
        \qquad \text{for }k\in\mathbb{Z}\qquad \text{and} \alpha^k\in
    \mathscr{Y}^k.
    \end{eqnarray}
    Moreover,
    \begin{eqnarray}\label{eqn:AH_reproducing formula}
        f(x)
        = \sum_{k\in\mathbb{Z}}\sum_{\alpha \in \mathscr{Y}^k}
            \langle f,\psi_{\alpha}^k \rangle \psi_{\alpha}^k(x)
    \end{eqnarray}
    in the sense of $L^2(X)$, $G_0(\beta',\gamma')$ for each $\beta'\in(0,\beta)$ and
    $\gamma'\in(0,\gamma)$ and the space $(\GGs_0(\beta,\gamma))'$ of distributions.
\end{thmA}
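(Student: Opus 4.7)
\medskip

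\noindent\textbf{Overall strategy.} The plan is to follow the Auscher--Hyt\"onen paradigm and break the construction into four stages: (i)~build a dyadic cube structure on $(X,d,\mu)$; (ii)~attach to each cube a H\"older-continuous ``spline'' bump, obtained via random dyadic averaging, so that the resulting nested subspaces $V_k\subset L^2(X)$ have smooth generators; (iii)~orthonormalise the wavelet spaces $W_k=V_{k+1}\ominus V_k$ to produce the $\psi_\alpha^k$; (iv)~upgrade the $L^2$ reproducing formula to $G_0(\beta',\gamma')$ and $(\GGs_0(\beta,\gamma))'$.

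\medskip

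\noindent\textbf{Step 1: dyadic cubes.} First I would invoke a Christ--Hyt\"onen--Kairema system of generalised dyadic cubes $\{Q_\alpha^k:k\in\mathbb{Z},\ \alpha\in\mathscr{Y}^k\}$ with reference points $x_\alpha^k$, diameters comparable to $\delta^k$ for a small fixed $\delta\in(0,1)$, satisfying the usual nesting together with the comparability $\mu(Q_\alpha^k)\sim\mu(B(x_\alpha^k,\delta^k))$. Applying Gram--Schmidt on the children of each cube would already furnish an orthonormal Haar-type basis of $L^2(X)$; however, Haar functions are only piecewise constant and therefore violate the H\"older bound \eqref{Holder-regularity}.

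\medskip

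\noindent\textbf{Step 2: smoothing via random dyadic grids.} To gain the H\"older exponent $\eta>0$ demanded by \eqref{Holder-regularity}, I would average the construction over a family of independent random dyadic systems. The key point is that, for a point $x\in X$, the probability of lying within distance $t\delta^k$ of a random cube boundary is controlled by a small power of $t$, and this probability can be converted into a H\"older estimate for the averaged indicator. One obtains building blocks $\phi_\alpha^k$ supported essentially in $B(x_\alpha^k,\delta^k)$, with exponential decay away from $x_\alpha^k$ and H\"older modulus $\eta$ at scale $\delta^k$. The spaces $V_k=\operatorname{span}\{\phi_\alpha^k\}_\alpha$ are nested, and the wavelet spaces $W_k=V_{k+1}\ominus V_k$ inherit these smoothness properties.

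\medskip

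\noindent\textbf{Steps 3--4: orthonormalisation and the reproducing formula.} Inside each $W_k$ I would orthonormalise a natural spanning set indexed by $\mathscr{Y}^k$ (for instance, via biorthogonalisation of adjacent random grids). The Gram matrix has exponentially decaying off-diagonal entries in the scaled quasi-metric, so its inverse does too, and the bounds transfer to the $\psi_\alpha^k$, giving \eqref{exponential decay} and \eqref{Holder-regularity}; the cancellation \eqref{cancellation} is automatic because $\psi_\alpha^k\perp V_k\ni\mathbf{1}$. The $L^2$ identity \eqref{eqn:AH_reproducing formula} follows from density of $\bigcup_k V_k$ in $L^2(X)$ (using $\mu(X)=\infty$, $\mu(\{x\})=0$ and $0<\mu(B(x,r))<\infty$). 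The extension to $G_0(\beta',\gamma')$ with $\beta'<\beta$, $\gamma'<\gamma$, and then to the dual $(\GGs_0(\beta,\gamma))'$, is carried out by a standard almost-orthogonality / density argument, using the decay and H\"older estimates on $\psi_\alpha^k$ against a test function.

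\medskip

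\noindent\textbf{Main obstacle.} The decisive difficulty is Step~2: manufacturing H\"older-continuous atoms on a space whose quasi-metric $d$ carries no regularity whatsoever. The Auscher--Hyt\"onen innovation — averaging over random dyadic grids to convert combinatorial boundary behaviour into a positive H\"older exponent — is what makes the construction work in full generality; once that smoothing is in place, Steps~1, 3 and 4 are technically delicate but conceptually routine.
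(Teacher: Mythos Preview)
The paper does not supply its own proof of this statement: Theorem~A is quoted verbatim from \cite{AH} (Theorem~7.1) and \cite{HLW} (Theorem~2.9, Corollary~2.10), and the authors simply refer the reader to those sources for the details. There is therefore nothing in the paper to compare your argument against beyond the attribution itself.

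That said, your outline is a faithful high-level summary of what those references actually do. The dyadic cubes of Step~1 are the Hyt\"onen--Kairema cubes recalled in the paper as Theorem~2; the random-grid smoothing of Step~2 is precisely the Auscher--Hyt\"onen mechanism that produces the H\"older exponent~$\eta$; the orthonormalisation with exponentially decaying Gram inverse is their Step~3; and the extension of the reproducing formula from $L^2$ to $G_0(\beta',\gamma')$ and $(\GGs_0(\beta,\gamma))'$ is the contribution of \cite{HLW}. One small caveat: in Step~4 you invoke $\mu(X)=\infty$ to get density of $\bigcup_k V_k$, but the Auscher--Hyt\"onen construction and the paper's standing hypotheses do not require this; the correct justification is that $\bigcap_k V_k=\{0\}$ follows from $\mu(\{x\})=0$, while $\overline{\bigcup_k V_k}=L^2(X)$ uses the covering and shrinking properties of the cubes, with the constant function handled separately when $\mu(X)<\infty$.
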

Here $\delta$ is a fixed small parameter, say $\delta \leq
10^{-3} A_0^{-10}$, $a= (1+2\log_2A_0)^{-1},$ and $C < \infty$, $\nu > 0$ and
$\eta\in(0,1]$ are constants independent of $k$, $\alpha$, $x$
and~$x_\alpha^k$. See \cite{AH} and \cite{HLW} for more notations and details of the proofs.

We now introduce the sequence spaces on spaces of homogeneous type in the sense of Coifman and Weiss as follows.
\begin{definition}\label{B T}
Suppose that $\omega$
    is the upper dimension of~$(X,d,\mu)$. For $-\infty<s<\infty$ and $0<p, q\leq \infty,$ we say that a sequence $\{\lambda_\alpha^k\}_{\alpha\in \mathscr{Y}^k ,{k\in\mathbb{Z}}}$ belongs to $\dot{b}^{s,q}_p$ if
    $$\|\{\lambda_\alpha^k\}\|_{\dot{{b}}^{s, q}_{p}} :=\Big\{\sum_{k\in\mathbb{Z}}\delta^{-ksq}
                        \bigg[ \sum_{ \alpha\in \mathscr{Y}^k} \Big( \mu ( Q_\alpha^k )^{1/p-1/2} |\lambda_\alpha^k| \Big)^p \bigg]^{q/p}\Big\}^{1/q}<\infty$$
    and a sequence $\{\lambda_\alpha^k\}_{\alpha\in \mathscr{Y}^k ,{k\in\mathbb{Z}}}$ belongs to ${b}^{s,q}_p$ if
    $$\|\{\lambda_\alpha^k\}\|_{{{ b}}^{s, q}_{p}} :=\Big\{\sum_{k\in\mathbb{Z}^+}\delta^{-ksq}
                        \bigg[ \sum_{ \alpha\in \mathscr{Y}^k} \Big( \mu ( Q_\alpha^k )^{1/p-1/2} |\lambda_\alpha^k| \Big)^p \bigg]^{q/p}\Big\}^{1/q}<\infty,$$
where $Q_\alpha^k$ are dyadic cubes in $\mathscr{Y}^k.$

A sequence $\{\lambda_\alpha^k\}_{\alpha\in \mathscr{Y}^k, k\in\mathbb{Z}}$ belongs to $\dot{f}^{s,q}_p$ for $-\infty<s<\infty, 0<p<\infty, 0< q\leq \infty$ if
             $$\|\{\lambda_\alpha^k\}\|_{{\dot {f}}^{s, q}_{p}} :=\Big\|\Big\{\sum_{k\in\mathbb{Z}}
                    \sum \limits_{\alpha \in \mathscr{Y}^k}\delta^{-ksq}\Big(
        \mu ( Q_\alpha^k )^{-1/2}|\lambda_{\alpha}^k |
        \chi_{{Q}^{{k}}_{\alpha }}(x)\Big)^q\Big\}^{1/q}\Big\|_{L^p(X)}<\infty, $$
and a sequence $\{\lambda_\alpha^k\}_{\alpha\in \mathscr{Y}^k, k\in\mathbb{Z}^+}$ belongs to ${f}^{s,q}_p$ if
$$\|\{\lambda_\alpha^k\}\|_{{{f}}^{s, q}_{p}} :=\Big\|\Big\{\sum_{k\in\mathbb{Z}^+}
                    \sum \limits_{\alpha \in \mathscr{Y}^k}\delta^{-ksq}\Big(
        \mu ( Q_\alpha^k )^{-1/2}|\lambda_{\alpha}^k |
        \chi_{{Q}^{{k}}_{\alpha }}(x)\Big)^q\Big\}^{1/q}\Big\|_{L^p(X)}<\infty.$$
\end{definition}
We remark that the above sequence spaces on $R^n$ were introduced by Frazier and Jawerth in \cite{FJ} with $\delta =2^{-1}$ and  $\mu ( Q_\alpha^k )=2^{-kn}.$

The main result of this paper is the following embedding theorem.
\begin{theorem}\label{embedding}
(i) Let $0<p_i\leq \infty, 0<q\leq \infty,$ $i=1,2$ and $s_1\leq s_2$ with $ -\infty<s_1-\omega/p_1=s_2-\omega/p_2<\infty$. Then
$$\|\{\lambda_\alpha^k\}\|_{{\dot { b}}^{s_1,q}_{p_1}}\leq C \|\{\lambda_\alpha^k\}\|_{{\dot {b}}^{s_2,q}_{p_2}}$$
if and only if the measure $\mu$ has the lower bound and
$$\|\{\lambda_\alpha^k\}\|_{{{b}}^{s_1,q}_{p_1}}\leq C \|\{\lambda_\alpha^k\}\|_{{{b}}^{s_2,q}_{p_2}}$$
if and only if the measure $\mu$ has the locally lower bound.

(ii) Let $0<p_i< \infty$ and $0<q_i\leq \infty$ for $i=1,2,$
and $s_1\leq s_2$ with $ -\eta<s_1-\omega/p_1=s_2-\omega/p_2<\eta$. Then
$$\|\{\lambda_\alpha^k\}\|_{{\dot {f}}^{s_1,q_1}_{p_1}}\leq C \|\{\lambda_\alpha^k\}\|_{{\dot {f}}^{s_2,q_2}_{p_2}}$$
if and only if $\mu$ has the lower bound and
$$\|\{\lambda_\alpha^k\}\|_{{{f}}^{s_1,q_1}_{p_1}}\leq C \|\{\lambda_\alpha^k\}\|_{{{f}}^{s_2,q_2}_{p_2}}$$
if and only if $\mu$ has the locally lower bound.
\end{theorem}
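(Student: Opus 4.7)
I will split the proof of each ``iff'' into sufficiency and necessity, treating the Besov-type scales (\(\dot b, b\)) and Triebel--Lizorkin scales (\(\dot f, f\)) in parallel. The common thread is that the Sobolev relation \(s_2-\omega/p_2=s_1-\omega/p_1\) is precisely the exponent needed to absorb the scaling factor \(\delta^{k\omega}\) produced by a lower bound on \(\mu\); note that this relation together with \(s_1\le s_2\) forces \(p_2\le p_1\).

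\textbf{Sufficiency for \(\dot b\) (and \(b\)).} Fix a level \(k\) and introduce \(c_\alpha:=\mu(Q_\alpha^k)^{1/p_2-1/2}|\lambda_\alpha^k|\), so that \(\mu(Q_\alpha^k)^{1/p_1-1/2}|\lambda_\alpha^k|=c_\alpha\,\mu(Q_\alpha^k)^{1/p_1-1/p_2}\). Since the exponent \(1/p_1-1/p_2\) is nonpositive, the lower bound \(\mu(Q_\alpha^k)\sim\mu(B(x_\alpha^k,\delta^k))\ge C\delta^{k\omega}\) gives \(\mu(Q_\alpha^k)^{1/p_1-1/p_2}\le C'\delta^{k(s_1-s_2)}\). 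Combined with the monotone sequence embedding \(\|c\|_{\ell^{p_1}}\le\|c\|_{\ell^{p_2}}\), this yields
\[
\Big[\sum_\alpha\big(\mu(Q_\alpha^k)^{1/p_1-1/2}|\lambda_\alpha^k|\big)^{p_1}\Big]^{1/p_1}\le C\,\delta^{k(s_1-s_2)}\Big[\sum_\alpha\big(\mu(Q_\alpha^k)^{1/p_2-1/2}|\lambda_\alpha^k|\big)^{p_2}\Big]^{1/p_2}.
\]
Multiplying by \(\delta^{-ks_1}\) converts \(\delta^{k(s_1-s_2)}\) into \(\delta^{-ks_2}\); the \(\ell^q\) sum over \(k\in\mathbb{Z}\) (resp.\ \(k\ge 0\)) then closes the \(\dot b\) (resp.\ \(b\)) direction.

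\textbf{Necessity.} I test the embedding against the atomic sequence with \(\lambda_{\alpha_0}^{k_0}=1\) and all other entries zero. A direct computation shows that both the \(\dot b\)- and \(\dot f\)-norms of this sequence equal \(\delta^{-k_0 s}\mu(Q_{\alpha_0}^{k_0})^{1/p-1/2}\), so the assumed embedding forces
\[
\mu(Q_{\alpha_0}^{k_0})^{1/p_2-1/p_1}\ge C^{-1}\delta^{k_0(s_2-s_1)}=C^{-1}\delta^{k_0\omega(1/p_2-1/p_1)},
\]
and raising to the positive power \((1/p_2-1/p_1)^{-1}\) produces \(\mu(Q_{\alpha_0}^{k_0})\ge C''\delta^{k_0\omega}\). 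To transfer this to an arbitrary ball \(B(x,r)\), I pick \(k_0\) with \(\delta^{k_0+1}\le r<\delta^{k_0}\) and use that \(\{x_\alpha^{k_0}\}\) is a \(\delta^{k_0}\)-net to find a reference point within distance \(\lesssim \delta^{k_0}\) of \(x\); doubling then converts the estimate on \(\mu(B(x_{\alpha_0}^{k_0},\delta^{k_0}))\) into the desired bound for \(\mu(B(x,r))\). In the inhomogeneous spaces only \(k_0\ge 0\) is available, which is exactly what produces the locally lower bound.

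\textbf{Sufficiency for \(\dot f\) and the principal obstacle.} When \(q_1\ne q_2\) the level-by-level comparison used for \(\dot b\) is not available, and I plan to adapt the Jawerth--Frazier--Jawerth level-set embedding to the dyadic cube system of Auscher--Hyt\"onen. For each \(\lambda>0\), decompose the \(\ell^{q_2}\)-gauge of the sequence by selecting the maximal cubes \(Q_\alpha^k\) on which a fixed component of the gauge exceeds \(\lambda\), and bound the distribution function of the \(\ell^{q_1}\)-gauge at the lower regularity by routing the remaining sum through the Hardy--Littlewood maximal operator; the Fefferman--Stein vector-valued maximal inequality is available on any space of homogeneous type. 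The lower bound \(\mu(Q_\alpha^k)\ge C\delta^{k\omega}\) enters precisely to convert the scaling mismatch \(\delta^{k(s_2-s_1)}\) into the geometric factor \(\mu(Q_\alpha^k)^{1/p_2-1/p_1}\), after which the \(L^{p_2}\to L^{p_1}\) comparison follows from the maximal inequality. The restriction \(|s_i-\omega/p_i|<\eta\) is imposed so that the Auscher--Hyt\"onen wavelets in Theorem~1 remain admissible molecules at the scales involved. The hardest part will be executing this Jawerth step without any regularity of the quasi-metric or translation invariance: the dyadic cubes and the lower bound on \(\mu\) together are the only geometric scaffolding available, and the careful tracking of dyadic covering constants through the maximal-function argument is where most of the work will lie.
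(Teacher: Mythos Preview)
Your sufficiency argument for the $\dot b$/$b$ scales matches the paper's proof exactly, and your outline for the $\dot f$/$f$ scales is in the right spirit (distribution-function method, maximal operator, Fefferman--Stein), though the paper executes it concretely via a wavelet almost-orthogonality trick: it rewrites each $\lambda_\tau^j$ as $\sum_{k,\alpha}\lambda_\alpha^k\langle\psi_\alpha^k,\psi_\tau^j\rangle$ and then \emph{replaces} the Kronecker delta by the quantitative decay estimate for $|\langle\psi_\alpha^k,\psi_\tau^j\rangle|$ to produce a kernel controllable by the Hardy--Littlewood maximal function. That is where the constraint $|s_1-\omega/p_1|<\eta$ actually enters (the choice of $\epsilon$ with $-\eta<-\epsilon<s_1-\omega/p_1<\epsilon<\eta$), not through any ``admissible molecule'' consideration. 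Your purely sequence-space Jawerth argument may well work, but it is not what the paper does and you would have to supply it in full.

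There is a genuine gap in your necessity argument. The atomic test sequence only gives $\mu(Q_{\alpha_0}^{k_0})\ge C\delta^{k_0\omega}$ for indices $\alpha_0\in\mathscr{Y}^{k_0}$, i.e.\ for the \emph{wavelet} centres $\mathscr{Y}^{k_0}=\mathscr{X}^{k_0+1}\setminus\mathscr{X}^{k_0}$, and these do \emph{not} form a $\delta^{k_0}$-net of $X$; only the full set $\mathscr{X}^{k_0}$ does. Your sentence ``use that $\{x_\alpha^{k_0}\}$ is a $\delta^{k_0}$-net to find a reference point'' therefore fails as written: near a given $x$ the closest level-$k_0$ centre may lie in $\mathscr{X}^{k_0}\setminus\mathscr{Y}^{k_0-1}$, where you have no information. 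The paper fills this gap with its Proposition~2.3: a chain argument showing that any cube $Q_\beta^k$ with $\beta\in\mathscr{X}^k$ must, within a bounded number $N$ of generations (because $\mu(\{x\})=0$ forces the inclusion $B(y,c_1\delta^k)\subset B(y,C_1\delta^{k+n})$ to fail for large $n$), acquire at least two children, one of which lies in some $\mathscr{Y}^{k'}$ and hence inherits the tested lower bound. Only after this propagation step can one pass to arbitrary balls via doubling (the paper's Proposition~2.4). You need to insert this argument, or an equivalent one, before your net-plus-doubling step.
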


To verify ``only if''-parts of Theorem \ref{embedding}, taking the sequence $\{\lambda_\alpha^k\}$ with $\lambda_{\alpha_0}^{k_0} =1$ and $\lambda_\alpha^k=0$ with $(\alpha, k)\not=(k_0,\alpha_0)$ for $k_0, k\in\mathbb{Z}$ and
                    $\alpha_0\in \mathscr{Y}^{k_0}, \alpha \in \mathscr{Y}^k,$ we then have
\begin{eqnarray*}
\|\{\lambda_{\alpha_0}^{k_0}\}\|_{\dot{{b}}^{s_1, q}_{p_1}(X)} &=& \delta^{-k_0s_1} \mu ( Q_{\alpha_0}^{k_0} )^{1/{p_1}-1/2}.
\end{eqnarray*}
Similarly,
\begin{eqnarray*}
\|\{\lambda_{\alpha_0}^{k_0}\}\|_{\dot{{b}}^{s_2, q}_{p_2}(X)} & =& \delta^{-k_0s_2} \mu ( Q_{\alpha_0}^{k_0} )^{1/{p_2}-1/2}.
\end{eqnarray*}
Therefore, if
\begin{eqnarray*}
\|\{\lambda_\alpha^k\}\|_{\dot{{b}}^{s_1, q}_{p_1}}\leq C \|\{\lambda_\alpha^k\}\|_{\dot{{b}}^{s_2, q}_{p_2}},
\end{eqnarray*}
we should have $\delta^{-k_0s_2} \mu ( Q_{\alpha_0}^{k_0} )^{1/{p_2}-1/2}\geq C \delta^{-k_0s_1} \mu ( Q_{\alpha_0}^{k_0} )^{1/{p_1}-1/2}$ and this implies that $\mu( Q_{\alpha_0}^{k_0} )\geq C\delta^{k_0\omega}$, for any $ k_0\in\mathbb{Z},\ \ \alpha_0\in\mathscr{Y}^{k_0}$. Repeating the same proof implies that if
\begin{eqnarray*}
\|\{\lambda_\alpha^k\}\|_{{b}^{s_1, q}_{p_1}}\leq C \|\{\lambda_\alpha^k\}\|_{ {b}^{s_2, q}_{p_2}},
\end{eqnarray*}
then $\mu( Q_{\alpha_0}^{k_0} )\geq C\delta^{k_0\omega}$, for any $ k_0\in\mathbb{Z}^+,\ \ \alpha_0\in\mathscr{Y}^{k_0}$.

The `` only if" parts for $\dot{f}^{s, q}_{p}(X)$ and ${f}^{s, q}_{p}(X)$ can be verified similarly to get $\mu( Q_{\alpha_0}^{k_0} )\geq C\delta^{k_0\omega}$ for any $ k_0\in\mathbb{Z},\ \ \alpha_0\in\mathscr{Y}^{k_0}$ and $\mu( Q_{\alpha_0}^{k_0} )\geq C\delta^{k_0\omega}$ for any $ k_0\in\mathbb{Z}^+,\ \ \alpha_0\in\mathscr{Y}^{k_0},$ respectively. Finally, the lower bound conditions will follow from the geometric structure of $(X, d,\mu),$ namely the following propositions.

\begin{prop}\label{prop key 1}
Suppose that for every $\ell\in\mathbb{Z}$ and every $\alpha\in \mathscr{Y}^\ell$,
\begin{eqnarray}\label{Yk}
\mu(Q_\alpha^\ell)\geq C\delta^{\ell\omega},
\end{eqnarray}
where $C$ is a positive
constant independent of $\ell$ and $\alpha$. Then we have
\begin{eqnarray}\label{Xk}
\mu(Q_\beta^k)\geq \widetilde{C}\delta^{k\omega}
\end{eqnarray}
for every $k\in\mathbb{Z}$ and every $\beta\in \mathscr{X}^k$, where $\widetilde{C}$ is a positive constant
independent of $k$ and $\beta$.
\end{prop}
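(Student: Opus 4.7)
The plan is to reduce the lower bound for arbitrary $\mathscr{X}^k$-cubes to the hypothesis on $\mathscr{Y}^\ell$-cubes by finding, inside each $Q_\beta^k$, a $\mathscr{Y}$-indexed descendant at a uniformly bounded finer scale. More precisely, I will establish the existence of an integer $j_0$, depending only on the geometric parameters $A_0,\delta,\omega$, such that for every $k\in\mathbb Z$ and every $\beta\in\mathscr{X}^k$ there exist $\ell\in\{k+1,\dots,k+j_0\}$ and $\alpha\in\mathscr{Y}^\ell$ with
\begin{equation*}
Q_\alpha^\ell \subset Q_\beta^k.
\end{equation*}
Granted this inclusion, the proposition follows from a single application of monotonicity.

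To produce such a descendant I would exploit the standard geometric data of the Auscher--Hyt\"onen cube system recalled in \cite{AH,HLW}: each cube $Q_\beta^k$ contains an inscribed ball $B(x_\beta^k,c_1\delta^k)$, while each cube $Q_\alpha^\ell$ is contained in a circumscribed ball $B(x_\alpha^\ell,c_2\delta^\ell)$. Choose $j_0$ large enough that $2A_0 c_2\delta^{j_0}\le c_1$. Then any $Q_\alpha^\ell$ at scale $\ell\ge k+j_0$ whose center $x_\alpha^\ell$ lies in $B(x_\beta^k,c_1\delta^k/(2A_0))$ is automatically contained in $B(x_\beta^k,c_1\delta^k)\subset Q_\beta^k$ by the quasi-triangle inequality \eqref{eqn:quasitriangleineq}. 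Since $\mathscr{Y}^\ell$ is spread densely enough across $X$ at each scale, some $\alpha\in\mathscr{Y}^{k+j_0}$ will satisfy this centering condition; this is the combinatorial fact I would lift directly from the construction of the wavelet basis.

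Given such $\alpha$ and $\ell$, monotonicity of $\mu$ together with the hypothesis \eqref{Yk} yields
\begin{equation*}
\mu(Q_\beta^k) \;\geq\; \mu(Q_\alpha^\ell) \;\geq\; C\,\delta^{\ell\omega} \;\geq\; C\,\delta^{(k+j_0)\omega} \;=\; \bigl(C\delta^{j_0\omega}\bigr)\,\delta^{k\omega},
\end{equation*}
so \eqref{Xk} holds with $\widetilde C:=C\delta^{j_0\omega}$, a constant that is manifestly independent of $k$ and $\beta$.

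The main obstacle is the combinatorial step: verifying that a point of the ``thinned'' wavelet-index set $\mathscr{Y}^\ell$ can always be placed inside the inner ball $B(x_\beta^k,c_1\delta^k/(2A_0))$ at some scale $\ell\le k+j_0$. The set $\mathscr{Y}^\ell$ is designed to index the wavelet basis rather than to refine the cube hierarchy, so one has to chase through the Auscher--Hyt\"onen construction and confirm that $\mathscr{Y}^\ell$ remains dense enough at each scale to meet any prescribed ball of comparable radius. Once this geometric/combinatorial fact is in hand, the measure-theoretic content of the proposition is exactly the three-step inequality displayed above.
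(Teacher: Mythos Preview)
Your reduction is exactly the paper's: find, inside $Q_\beta^k$, a $\mathscr{Y}$-indexed descendant at level $\ell\le k+j_0$ with $j_0$ depending only on the structural constants, and then apply the one-line estimate
\[
\mu(Q_\beta^k)\ge\mu(Q_\alpha^\ell)\ge C\delta^{\ell\omega}\ge C\delta^{(k+j_0)\omega}.
\]
So the measure-theoretic shell is identical. The issue is the step you flag as the ``main obstacle.''

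Your proposed route to that step---assert that $\mathscr{Y}^{k+j_0}$ is ``spread densely enough'' so that some center of a $\mathscr{Y}^{k+j_0}$-cube lands in $B(x_\beta^k,c_1\delta^k/(2A_0))$---does not follow from the construction. The index set $\mathscr{Y}^\ell$ records only the \emph{new} reference points appearing when one refines from level $\ell$ to level $\ell+1$; at a fixed level $\ell$ it can be empty in large regions (precisely where every level-$\ell$ cube has a single child). So $\mathscr{Y}^\ell$ is not $\delta^\ell$-dense in $X$, and the centering claim at the single level $\ell=k+j_0$ need not hold. What \emph{is} true, and what the proposition really needs, is density across a bounded \emph{range} of levels.

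The paper supplies exactly this. It never looks at a fixed finer level; instead it follows the chain of children $Q_\beta^k=Q_{\beta_0}^k\supseteq Q_{\beta_1}^{k+1}\supseteq\cdots$ until the first branching. When a cube $Q_{\beta_i}^{k+i}$ has $M\ge 2$ children, $M-1$ of them are (by the very definition of $\mathscr{Y}$) indexed by $\mathscr{Y}^{k+i}$, and one applies the hypothesis to any of these. The only thing to control is the length of a single-child chain: since a cube with a unique child \emph{equals} that child as a set, one has $B(x_\beta^k,c_1\delta^k)\subset Q_\beta^k=Q_{\beta_n}^{k+n}\subset B(x_\beta^k,C_1\delta^{k+n})$, hence $c_1\delta^k\le C_1\delta^{k+n}$ and $n\le N:=\lfloor\log_{1/\delta}(C_1/c_1)\rfloor+1$. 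This gives $j_0=N+1$ depending only on $c_1,C_1,\delta$.

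In short: your outline is correct and coincides with the paper's, but the combinatorial heart is not something one can ``lift''---it \emph{is} the content of the proposition. Replace the single-level density claim by the bounded-chain argument above and your proof is complete.
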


Assume the above  proposition for the moment, we can obtain the following  result for the lower bound of the measure of
any balls in $X$, which provide the necessary condition for the embedding theorem.
\begin{prop}\label{prop key 2}
Suppose that for every $k\in\mathbb{Z}$ and every $\beta\in \mathscr{X}^k$,
\begin{eqnarray}\label{Xk 1}
\mu(Q_\beta^k)\geq C\delta^{k\omega},
\end{eqnarray}
where $C$ is a positive
constant independent of $k$ and $\alpha$. Then we have
\begin{eqnarray}\label{ball}
\mu(B(x,r))\geq \widetilde{C} r^\omega
\end{eqnarray}
for every $x\in X$ and every $r>0$, where $\widetilde{C}$ is a positive constant
independent of $x$ and $r$.
\end{prop}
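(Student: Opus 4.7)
The plan is to invoke the standard geometric properties of the Auscher--Hyt\"onen/Christ dyadic system underlying the wavelet basis from Theorem~1: for every $k\in\mathbb{Z}$ and every $\beta\in\mathscr{X}^k$, there is a reference point $x_\beta^k\in Q_\beta^k$ and constants $0<c_1\le C_1<\infty$ (depending only on $A_0$ and $\delta$) such that
\[
B(x_\beta^k,c_1\delta^k)\subset Q_\beta^k\subset B(x_\beta^k,C_1\delta^k),
\]
and $\{Q_\beta^k\}_{\beta\in\mathscr{X}^k}$ covers $X$ for each $k$. These two facts are exactly what is needed to convert the cube-level lower bound \eqref{Xk 1} into a ball-level lower bound.

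Given $x\in X$ and $r>0$, the first step is to choose $k=k(r)\in\mathbb{Z}$ so that $\delta^k$ is comparable to $r$ on the scale of the geometric constants. Precisely, I would take $k$ to be the unique integer with
\[
\frac{\delta r}{2A_0C_1}<\delta^k\le\frac{r}{2A_0C_1}.
\]
Since the cubes at level $k$ cover $X$, there is some $\beta_0\in\mathscr{X}^k$ with $x\in Q_{\beta_0}^k$. Then $d(x,x_{\beta_0}^k)\le C_1\delta^k$ and $d(y,x_{\beta_0}^k)\le C_1\delta^k$ for every $y\in Q_{\beta_0}^k$, so by the quasi-triangle inequality \eqref{eqn:quasitriangleineq}
\[
d(x,y)\le A_0\bigl(d(x,x_{\beta_0}^k)+d(x_{\beta_0}^k,y)\bigr)\le 2A_0C_1\delta^k\le r,
\]
which shows $Q_{\beta_0}^k\subset B(x,r)$.

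The final step is to apply the hypothesis \eqref{Xk 1} together with the lower bound $\delta^k>\delta r/(2A_0C_1)$, which yields
\[
\mu(B(x,r))\ge\mu(Q_{\beta_0}^k)\ge C\delta^{k\omega}\ge C\Bigl(\frac{\delta r}{2A_0C_1}\Bigr)^{\!\omega}=\widetilde{C}\,r^\omega,
\]
which is exactly \eqref{ball}. The argument is essentially an exercise in using the inclusion $Q_\beta^k\subset B(x_\beta^k,C_1\delta^k)$ to locate a dyadic cube sitting inside $B(x,r)$ whose measure is already controlled from below by $\delta^{k\omega}\sim r^\omega$; there is no real obstacle beyond the routine bookkeeping with the geometric constants $A_0$, $\delta$, $C_1$. (Note that requiring the hypothesis for every $k\in\mathbb{Z}$ implicitly forces $X$ to be unbounded, so cubes exist at every level and the selection of $k$ is always possible; for the locally lower bound conclusion one simply restricts to $r\in(0,1]$, which confines $k$ to the range where $\delta^k\lesssim 1$.)
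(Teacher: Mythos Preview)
Your proof is correct and follows essentially the same strategy as the paper: pick a dyadic scale $k$ with $\delta^k$ comparable to $r$, locate a level-$k$ cube inside $B(x,r)$, and invoke the hypothesis together with $\delta^k\sim r$. The paper's version is slightly more elaborate---it covers a smaller ball $B(x,\alpha r)$ by finitely many level-$k$ cubes and argues each lies in $B(x,r)$---but since in the end it only uses that at least one such cube exists, your single-cube argument (finding the cube containing $x$ and using the quasi-triangle inequality directly) is a cleaner realization of the same idea.
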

Before proving Propositions \ref{prop key 1} and \ref{prop key 2}, we recall the fundamental result of the construction of dyadic cubes
by Hytonen and Kairema \cite{HK}.
\begin{thmB}[\cite{HK} Theorem 2.2]\label{theorem dyadic cubes}
    Suppose that constants $0 < c_0 \leq C_0 < \infty$ and
    $\delta\in(0,1)$ satisfy
    \begin{eqnarray}\label{test condition for cubes}
        12A_0^3C_0\delta
        \leq c_0.
    \end{eqnarray}
    Given a set of points $\{z_\alpha^k\}_{\alpha}$, $\alpha
    \in \mathscr{A}_k$, for every $k\in\mathbb{Z}$, with the
    properties that
    \begin{eqnarray}\label{sparse property}
        d(z_\alpha^k,z_\beta^k)
        \geq c_0\delta^k\
            (\alpha\not=\beta),\hskip1cm
        \min_\alpha d(x,z_\alpha^k)
        < C_0\delta^k,
            \qquad \text{for all $x\in X$},
    \end{eqnarray}
    we can construct families of sets $\widetilde{Q}_\alpha^k
    \subseteq Q_\alpha^k \subseteq
    \overline{Q}_\alpha^k$ (called open, half-open and closed
    \emph{dyadic cubes}), such that:
    \begin{eqnarray}
        && \widetilde{Q}_\alpha^k \mbox{ and } \overline{Q}_\alpha^k
            \mbox{ are the interior and closure of } Q_\alpha^k, \mbox{ respectively};\\
        &&\mbox{if } \ell\geq k, \mbox{ then either } Q_\beta^\ell\subseteq
            Q_\alpha^k \mbox{ or } Q_\alpha^k
            \cap Q_\beta^\ell=\emptyset;\label {DyadicP1}\\
        &&  X = \bigcup_\alpha Q_\alpha^k\ \ (\mbox{disjoint union})\qquad
            \text{for all $k\in\mathbb{Z}$};\label {DyadicP2} \\
        && B(z_\alpha^k,c_1\delta^k)\subseteq Q_\alpha^k\subseteq
            B(z_\alpha^k,C_1\delta^k),\ \mbox{where } c_1 := (3A_0^2)^{-1}c_0\
            \mbox{and}\  C_1 := 2A_0C_0;\label {prop_cube3}\\
        &&\mbox{if } \ell \geq k \mbox{ and } Q_\beta^\ell\subseteq Q_\alpha^k,
            \mbox{ then } B(z_\beta^\ell,C_1\delta^\ell)\subseteq
            B(z_\alpha^k,C_1\delta^k).\label {DyadicP4}
    \end{eqnarray}
    The open and closed cubes $\widetilde{Q}_\alpha^k$ and
    $\overline{Q}_\alpha^k$ depend only on the points
    $z_\beta^\ell$ for $\ell\geq k$. The half-open cubes
    $Q_\alpha^k$ depend on $z_\beta^\ell$ for $\ell\geq
    \min(k,k_0)$, where $k_0\in\mathbb{Z}$ is a preassigned
    number entering the construction.
\end{thmB}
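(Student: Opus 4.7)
The plan is to adapt Michael Christ's classical dyadic cube construction, in the sharper form of Hyt\"onen--Kairema, so that the families $\widetilde{Q}_\alpha^k \subseteq Q_\alpha^k \subseteq \overline{Q}_\alpha^k$ are built from a tree structure on the given centers $\{z_\alpha^k\}$. First I would construct the tree: for each level $k$ and each $\beta\in\mathscr{A}_{k+1}$, define the parent $\pi_k(\beta)\in\mathscr{A}_k$ to be the index that minimizes $d(z_\beta^{k+1},z_\alpha^k)$, with ties broken by a fixed well-ordering of $\mathscr{A}_k$. The covering condition in \eqref{sparse property} applied at $x=z_\beta^{k+1}$ gives $d(z_\beta^{k+1},z_{\pi_k(\beta)}^k)<C_0\delta^k$. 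Composing these maps yields ancestor maps $\pi^{\ell,k}\colon\mathscr{A}_\ell\to\mathscr{A}_k$ for $\ell\geq k$, so every $\beta\in\mathscr{A}_\ell$ has a unique ancestor at each coarser level.

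Second, I would define the cubes intrinsically from the tree. Set
$$\overline{Q}_\alpha^k=\overline{\bigcup_{\ell\geq k}\bigcup_{\beta:\pi^{\ell,k}(\beta)=\alpha}B(z_\beta^\ell,c_1\delta^\ell)}$$
with $c_1=(3A_0^2)^{-1}c_0$, and let $\widetilde{Q}_\alpha^k$ be its interior. The inner-ball inclusion $B(z_\alpha^k,c_1\delta^k)\subseteq\widetilde{Q}_\alpha^k$ is immediate because $\alpha$ is its own ancestor at level $k$, while the separation $d(z_\alpha^k,z_{\alpha'}^k)\geq c_0\delta^k$ keeps competing cubes at a definite distance. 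The outer-ball inclusion $\overline{Q}_\alpha^k\subseteq B(z_\alpha^k,C_1\delta^k)$ with $C_1=2A_0C_0$ is proved by iterating the quasi-triangle inequality along an ancestor chain: this produces a geometric series $\sum_{j\geq k}A_0^{j-k+1}C_0\delta^j$, whose convergence and total control is exactly what the smallness hypothesis $12A_0^3C_0\delta\leq c_0$ enforces. The nested property \eqref{DyadicP1} is then built into the tree, and disjointness of non-nested closed cubes follows from the outer-ball estimate combined with the separation.

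Third, and this is the main obstacle, I would manufacture genuine half-open cubes $Q_\alpha^k$ that form a disjoint partition of $X$, with the prescribed dependence on the preassigned base level $k_0$. The difficulty is that boundary points may lie in several $\overline{Q}_\alpha^k$ at once, so an assignment rule consistent across all scales is required. For each $x\in X$ I would define the assigned index $\alpha_k(x)$ by a top-down rule anchored at level $k_0$: take $\alpha_{k_0}(x)$ to be the nearest center with ties broken by the fixed well-ordering; for each finer level $k>k_0$ restrict the choice to children of $\alpha_{k-1}(x)$ and again pick the closest; and for coarser levels $k<k_0$ walk up the tree. Setting $Q_\alpha^k=\{x:\alpha_k(x)=\alpha\}$ makes disjointness and the covering \eqref{DyadicP2} automatic. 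The delicate checks are that the interior and closure of $Q_\alpha^k$ recover $\widetilde{Q}_\alpha^k$ and $\overline{Q}_\alpha^k$, and that the stated dependence on base-level data holds — both reduce to reconciling the tree-based construction with the top-down assignment rule, and both use the smallness condition $12A_0^3C_0\delta\leq c_0$ in a quantitative way.
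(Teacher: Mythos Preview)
The paper does not prove this statement at all: Theorem~2 is quoted from Hyt\"onen--Kairema \cite{HK} as an external tool, exactly as Theorem~1 is quoted from \cite{AH}, and is then simply \emph{used} in the proofs of Propositions~\ref{prop key 1} and~\ref{prop key 2}. There is therefore no proof in the paper to compare your attempt against.

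For what it is worth, your outline does follow the Christ/Hyt\"onen--Kairema strategy (tree on the centers, ancestor-chain geometric series controlled by $12A_0^3C_0\delta\leq c_0$, top-down assignment anchored at $k_0$) and is broadly sound. One point of order differs from \cite{HK}: they build the half-open cubes $Q_\alpha^k$ first from the assignment rule and then \emph{define} $\widetilde{Q}_\alpha^k$ and $\overline{Q}_\alpha^k$ as the interior and closure of $Q_\alpha^k$, rather than defining $\overline{Q}_\alpha^k$ directly as the closure of a union of descendant inner balls. Your variant can be reconciled with theirs (any $x\in Q_\alpha^k$ is the limit of its assigned centers $z_{\alpha_\ell(x)}^\ell$ as $\ell\to\infty$, hence lies in the closure of the union of inner balls), but the check that your $\overline{Q}_\alpha^k$ genuinely cover $X$ and that interior/closure match the half-open cubes is where the real work hides, and your sketch acknowledges but does not carry this out.
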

\begin{proof}[Proof of Proposition \ref{prop key 1}]

For each fixed $k\in \mathbb{Z}$ and $\beta\in \mathscr{X}^k$, we now consider the number of the children of the dyadic cube $Q_\beta^k$.
Suppose now $Q_\beta^k$ has $M$ children and $M\geq2$, say $Q_{\beta_1}^{k+1},\ldots, Q_{\beta_M}^{k+1}$. Then from the construction of $\mathscr{Y}^k$ we get that
there must be one of the $M$ children belongs to $\mathscr{X}^{k+1}$ sharing the same center with $Q_\beta^k$, and the other $M-1$ children belong to $\mathscr{Y}^k$. Without lost of generality,
we assume that $Q_{\beta_1}^{k+1} \in \mathscr{X}^{k+1}$ and $ Q_{\beta_2}^{k+1},\ldots, Q_{\beta_M}^{k+1} \in \mathscr{Y}^{k}   $.
Then from \eqref{Yk} we get that $\mu( Q_{\beta_i}^{k+1} )\geq C\delta^{(k+1)\omega} $ for $i=2,\ldots,M$. As a consequence, we have
$$  \mu(Q_\beta^k)\geq \sum_{i=2}^M \mu( Q_{\beta_i}^{k+1} ) \geq C (M-1) \delta^{(k+1)\omega} = \widetilde{C} \delta^{k\omega},   $$
where we set $\widetilde{C}=C (M-1)\delta^\omega$.

Suppose now $Q_\beta^k$ has only one child, say $ Q_{\beta_1}^{k+1} $. Note that actually $ Q_{\beta_1}^{k+1} $ is the same as $ Q_{\beta}^{k} $. Suppose
$ Q_{\beta_1}^{k+1} $ has $M$ children in the $k+2$ level, $M\geq2$. Then using the previous argument we get that
$$  \mu(Q_\beta^k)\geq C (M-1) \delta^{(k+2)\omega} = \widetilde{C} \delta^{k\omega},   $$
where we set $\widetilde{C}=C (M-1)\delta^{2\omega}$.

If $ Q_{\beta_1}^{k+1} $ also has only one child, say $ Q_{\widetilde{\beta}_1}^{k+2}, $ then we further consider the number of the children of $ Q_{\widetilde{\beta}_1}^{k+2}.$

Thus, we now claim the following fact:

{\it
Suppose now we have a chain, $\{ Q_{\beta_0}^k$, $Q_{\beta_1}^{k+1}$, $Q_{\beta_2}^{k+2}, \ldots, $ $Q_{\beta_n}^{k+n}\}$, satisfying the condition that
$Q_{\beta_1}^{k+1}$ is the only one child of $Q_{\beta_0}^k$, $Q_{\beta_2}^{k+2}$ is the only one child of $Q_{\beta_1}^{k+1}$, and so on. Here $k\in\mathbb{Z}$
and $\beta_i\in \mathscr{X}^{k+i}$, $i=0,1,2,\ldots,n$.

Then there exists a positive constant $N$ such that for every possible chain satisfying the above conditions, we have $n \leq N$. In other words, $N$ is independent of $k$ and $\beta_0,\ldots,\beta_n$.
}

We now prove the above claim. Suppose $\{ Q_{\beta_0}^k$, $Q_{\beta_1}^{k+1}$, $Q_{\beta_2}^{k+2}, \ldots, $ $Q_{\beta_n}^{k+n}\}$ is an arbitrary chain satisfying the condition as in the claim.

First note that from (\ref{prop_cube3}) in Theorem \ref{theorem dyadic cubes}, we have
         $$
              B(y_{\beta_i}^{k+i},c_1\delta^{k+i})\subseteq Q_{\beta_i}^{k+i} \subseteq
              B(y_{\beta_i}^{k+i},C_1\delta^{k+i})
         $$
for $i=0,1,\ldots, n$, where  $c_1 := (3A_0^2)^{-1}c_0$  and  $C_1 := 2A_0C_0$.

Second, due to the condition of this chain, all the dyadic cubes $Q_{\beta_i}^{k+i}$ with $i=1,2,\ldots,n$ are the same as $Q_{\beta_0}^{k}$ and sharing the same center point. Thus,
we have
           $$
              B(y_{\beta_0}^{k},c_1\delta^{k})\subseteq Q_{\beta_0}^{k} = Q_{\beta_n}^{k+n} \subseteq
              B(y_{\beta_n}^{k+n},C_1\delta^{k+n}),
           $$
where  $y_{\beta_0}^{k}$ coincides with $y_{\beta_n}^{k+n}$. As a consequence, we get that
$$ c_1\delta^{k}\leq C_1\delta^{k+n}. $$

Note that from the setting of the space of homogeneous type $(X,d,\mu)$, we have the assumptions
that $ \mu(\{x\})=0 $ for every $x\in X$ and  that $0<\mu(B(x,r))<\infty$ for every $x\in X$ and $r>0$.
Thus, the balls $ \{ B(y_{\beta_i}^{k+i},c_1\delta^{k+i}) \} $ shrinks to the center point $y_{\beta_0}^{k}$ if $i$ tends to infinity, and
these balls must not be the same as $\{ y_{\beta_0}^{k}\}$ since there is no point mass.

This implies that
$$n\leq \log_{\delta^{-1}}\Big({C_1\over c_1}\Big), $$
which yields that the claim holds with $N=\big\lfloor\log_{\delta^{-1}}\Big({C_1\over c_1}\Big)\big\rfloor+1$.

Thus,for each fixed $k\in \mathbb{Z}$ and $\beta\in \mathscr{X}^k$, if the dyadic cube $Q_\beta^k$ has only one child $Q_{\beta_1}^{k+1}$, $Q_{\beta_1}^{k+1}$ has only one child $Q_{\beta_2}^{k+2}$ and so on, then this chain $\{Q_{\beta_i}^{k+i}\}$ must be finite and has at most $N$ cubes. That is,  $Q_{\beta_{N}}^{k+N}$ must have $M$ children and $M\geq 2$, say $Q_{\gamma_1}^{k+N+1},\ldots, Q_{\gamma_M}^{k+N+1}$, and there is only one of these $M$ children belonging to $\mathscr{X}^{k+N+1}$, all the other $M-1$ children
belonging to $\mathscr{Y}^{k+N}$.

Then using the previous argument again we get that
$$  \mu(Q_\beta^k)\geq C (M-1) \delta^{(k+N+1)\omega} = \widetilde{C} \delta^{k\omega},   $$
where we set $\widetilde{C}=C (M-1)\delta^{(N+1)\omega }$.
\end{proof}

\bigskip

\begin{proof}[Proof of Proposition \ref{prop key 2}]
Fix $x\in X$ and $r>0$. We set $ \alpha= ( 1+2\delta^{-1})^{-1} $. Next we choose $k\in\mathbb{Z}$ such that
$$ C_1\delta^{k+1}\leq \alpha r< C_1\delta^k, $$
where $C_1$ is the constant in Theorem \ref{theorem dyadic cubes}. Then we have
$$ (1-\alpha)r < r-C_1\delta^{k+1} ,$$
which implies that
$ C_1\delta^{k+1}<\alpha r $ and hence
\begin{eqnarray}\label{comparision}
 C_1\delta^{k} < {\alpha r  \over \delta} = {(1-\alpha)\over 2}r.
\end{eqnarray}

Now note that $B(x,\alpha r)$ must be cover by
a union of at most $M$ dyadic cubes in $\mathscr{X}^k$, since
1) $\cup_{\beta\in \mathscr{X}^k} Q_\beta^k = X$; 2) $\alpha r$ is comparable to the sidelength of the cubes in $\mathscr{X}^k$;
3) the doubling property of $\mu$ implies that the space $(X,d)$ is geometrically doubling.
Here we also point out that $M$ is a positive constant independent of $x$ and $r$.

Suppose now $Q_{\beta_1}^k,\ldots,Q_{\beta_n}^k$ is the dyadic cubes in $\mathscr{X}^k$ such that $\cup_{i=1}^n Q_{\beta_i}^n$ covers $B(x,\alpha r)$ and that
$Q_{\beta_i}^n\cap B(x,\alpha r)\not=\emptyset$, $n\leq M$.
Then  from (\ref{prop_cube3}) in Theorem \ref{theorem dyadic cubes}, we have
         $$
               Q_{\beta_i}^{k} \subseteq
              B(y_{\beta_i}^{k},C_1\delta^{k})
         $$
for $i=0,1,\ldots, n$.

We now point out that (\ref{comparision}) implies that
$ B(y_{\beta_i}^{k},C_1\delta^{k}) $ is contained in $B(x,r)$ for every
 $i=0,1,\ldots, n$. Hence, $Q_{\beta_i}^k$ is contained in $B(x,r)$ for every
 $i=0,1,\ldots, n$.
As a consequence, we have
\begin{eqnarray}
\mu(B(x,r))\geq \sum_{i=1}^n \mu(Q_{\beta_i}^k) \geq n \cdot C\delta^{k\omega}\geq n \cdot C \Big( {\alpha r\over C_1}\Big)^{\omega}\geq \widetilde{C}r^\omega,
\end{eqnarray}
where the second inequality follows from \eqref{Xk 1}, and $\widetilde{C}= C \big( {\alpha \over C_1}\big)^{\omega}$.
\end{proof}
\begin{corollary}\label{coro key 2}
Suppose that for every $k\in\mathbb{Z}$ and $k\geq0$, and for every $\alpha\in \mathscr{Y}^k$,
\begin{eqnarray}\label{Yk 1 coro}
\mu(Q_\alpha^k)\geq C\delta^{k\omega},
\end{eqnarray}
where $C$ is a positive
constant independent of $k$ and $\alpha$. Then we have
\begin{eqnarray}\label{ball coro}
\mu(B(x,r))\geq \widetilde{C} r^\omega
\end{eqnarray}
for every $x\in X$ and every $r>0$, where $\widetilde{C}$ is a positive constant
independent of $x$ and $r$.
\end{corollary}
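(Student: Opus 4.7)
The plan is to adapt the two-step argument used to derive Proposition \ref{prop key 2} from \eqref{Yk} to the one-sided hypothesis \eqref{Yk 1 coro} assumed only for $k \geq 0$.

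First, I would mimic Proposition \ref{prop key 1} to show that \eqref{Yk 1 coro} already forces $\mu(Q_\beta^k) \geq \widetilde{C}\delta^{k\omega}$ for every $k \geq 0$ and every $\beta \in \mathscr{X}^k$. Fix such a pair $(k,\beta)$ and inspect the children of $Q_\beta^k$. If $Q_\beta^k$ has $M \geq 2$ children, exactly one lies in $\mathscr{X}^{k+1}$ and the other $M-1$ lie in $\mathscr{Y}^{k+1}$; since $k+1 \geq 1 \geq 0$, the assumption applies to these $M-1$ children and yields $\mu(Q_\beta^k) \geq C(M-1)\delta^{(k+1)\omega} = \widetilde{C}\delta^{k\omega}$. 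Otherwise $Q_\beta^k$ has a single child coinciding with itself, and we descend along a chain of such singletons. Exactly as in Proposition \ref{prop key 1}, the containment $B(y_{\beta_0}^k, c_1\delta^k) \subseteq B(y_{\beta_n}^{k+n}, C_1\delta^{k+n})$ together with $\mu(\{x\}) = 0$ forces the length of this chain to be at most $N := \lfloor \log_{\delta^{-1}}(C_1/c_1) \rfloor + 1$, a purely geometric bound independent of the starting $(k,\beta)$. Hence the chain must split at some level $\leq k+N$, and the previous case delivers $\mu(Q_\beta^k) \geq C(M-1)\delta^{(k+N+1)\omega} = \widetilde{C}\delta^{k\omega}$. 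Crucially, since $k \geq 0$, every $\mathscr{Y}$-level invoked along the way is $\geq 1$, so \eqref{Yk 1 coro} remains applicable throughout.

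Second, I would run the covering argument of Proposition \ref{prop key 2} verbatim: with $\alpha = (1+2\delta^{-1})^{-1}$, choose $k \in \mathbb{Z}$ satisfying $C_1\delta^{k+1} \leq \alpha r < C_1\delta^k$. When $r$ is small enough that this selection forces $k \geq 0$ (equivalently $\alpha r < C_1$), the ball $B(x,\alpha r)$ is covered by boundedly many cubes $Q_{\beta_i}^k \in \mathscr{X}^k$, each contained in $B(x,r)$ by \eqref{comparision}, and each satisfying $\mu(Q_{\beta_i}^k) \geq C\delta^{k\omega}$ by Step 1; summing these contributions gives $\mu(B(x,r)) \geq \widetilde{C} r^\omega$. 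For larger $r$ where $k$ would be negative, the monotonicity $\mu(B(x,r)) \geq \mu(B(x,r_0))$ with a fixed $r_0$ in the already-handled regime gives a uniform positive lower bound; since $r^\omega$ is comparable to a constant on any bounded interval, this can be absorbed into $\widetilde{C}$, and the statement matches the ``locally lower bound'' condition of Definition \ref{density}.

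The main obstacle, and the only place where the restriction $k \geq 0$ demands care, is the chain argument in Step 1: one must ensure that the inductive move from level $k$ to $k+N$ never consults the hypothesis at a level below $0$. This is automatic because the chain moves only upward in $k$, so the starting constraint $k \geq 0$ is exactly what is needed. With that bookkeeping in place, everything else is a direct transcription of the proofs of Propositions \ref{prop key 1} and \ref{prop key 2}.
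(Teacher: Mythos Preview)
Your Step 1 is essentially the paper's first move, though note a small indexing slip: in the paper's convention the $M-1$ ``new'' children of $Q_\beta^k$ lie in $\mathscr{Y}^k$, not $\mathscr{Y}^{k+1}$. Since $k\geq 0$ this still keeps you within the hypothesis, so the argument survives.

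The real divergence is in how you treat large radii. The corollary, as stated, asserts $\mu(B(x,r))\geq \widetilde{C}r^\omega$ for \emph{every} $r>0$, and the paper obtains this by first pushing the cube estimate down to \emph{all} levels $\ell\in\mathbb{Z}$ via a backward induction: once \eqref{Xk all k} is known for all $k\geq \ell+1$, the same chain-of-single-children argument (with the uniform geometric bound $N$ on chain length) is run from $Q_\beta^\ell$, and the splitting children---now cubes at a level already covered by the induction hypothesis---supply the lower bound. Only after \eqref{Xk all k} holds for every $\ell\in\mathbb{Z}$ does the paper invoke Proposition~\ref{prop key 2} wholesale.

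Your Step 3, by contrast, stops at $k\geq 0$ and appeals to monotonicity $\mu(B(x,r))\geq \mu(B(x,r_0))$ for large $r$. That gives a uniform positive constant, but not a bound of order $r^\omega$: since $r$ is unbounded, ``$r^\omega$ is comparable to a constant on any bounded interval'' does not apply, and no single $\widetilde{C}$ can absorb the discrepancy. What you actually prove is the locally lower bound of Definition~\ref{density}, which is indeed what the inhomogeneous embedding in Theorem~\ref{embedding} requires---but it is strictly weaker than the corollary's stated conclusion. To match the paper you must carry out the induction to negative $\ell$; the point you are missing is that the chain-length bound $N$ is purely geometric and therefore continues to hold at every level, allowing the backward induction to proceed with the already-established bounds at higher levels serving as the input.
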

\begin{proof}
We first claim that the condition \eqref{Yk 1 coro} for every $k\in\mathbb{Z}$ and $k\geq0$, and for every $\alpha\in \mathscr{Y}^k$ implies that
\begin{eqnarray}\label{Xk all k}
\mu(Q_\beta^\ell)\geq C\delta^{\ell\omega}
\end{eqnarray}
for every $\ell\in\mathbb{Z}$ and every $\beta\in \mathscr{X}^\ell$.

Suppose the above claim holds, then by applying the result in Proposition \ref{prop key 2}, we obtian that \eqref{ball coro} holds for every $x\in X$ and every $r>0$, where $\widetilde{C}$ is a positive constant
independent of $x$ and $r$.

Now we prove the claim. First suppose  that $\ell\geq0$ and $\beta\in \mathscr{X}^\ell$. Then following the same proof of Propsition \ref{prop key 1}, we obtain that
$$\mu(Q_\beta^\ell)\geq C\delta^{\ell\omega},$$
where $C$ is a positive constant
independent of $\ell$ and $\beta$.

Next consider $\ell=-1$ and $\beta\in \mathscr{X}^{-1}$. Note that all the decendent of the cube $Q_\beta^\ell$ are in $\mathscr{X}^L$ with the level index $L\geq0$.  Following the same proof of Propsition \ref{prop key 1} again, we obtain that
$$\mu(Q_\alpha^\ell)\geq C\delta^{\ell\omega},$$
where $C$ is a positive constant
independent of $\ell$ and $\beta$. Thus, the claim \eqref{Xk all k} holds for all $\ell\geq-1$.

By induction we obtain that the claim \eqref{Xk all k} holds for all $\ell<-1$ as well. This completes the proof of Corollary \ref{coro key 2}.
\end{proof}

We now show the `` if" parts of Theorem \ref{embedding}. Applying $\frac{p_2}{p_1}$-inequality for $\frac{p_2}{p_1}\leq 1$ yields
\begin{eqnarray*}\label{4.2}
\|\{b_\alpha^k\}\|_{\dot{ b}^{s_1, q}_{p_1}(X)} &=& \bigg\{\sum_{k\in \mathbb{Z}}\delta^{-ks_1q}\bigg[ \sum_{ \alpha\in \mathscr{Y}^k} \Big( \mu ( Q_\alpha^k )^{1/{p_1}-1/2} |b_\alpha^k| \Big)^{p_1} \bigg]^{q/{p_1}}
           \bigg\}^{1/q}\\
     &\leq & \bigg\{\sum_{k\in \mathbb{Z}}\delta^{-ks_1q}\bigg[ \sum_{ \alpha\in \mathscr{Y}^k} \Big(  \mu ( Q_\alpha^k )^{1/{p_1}-1/{p_2}}\mu ( Q_\alpha^k )^{1/{p_2}-1/2} |b_\alpha^k| \Big)^{p_2} \bigg]^{q/{p_2}}
           \bigg\}^{1/q}
     \\    &\leq &
         C\bigg\{\sum_{k\in \mathbb{Z}}\delta^{-ks_1q}\bigg[ \sum_{ \alpha\in \mathscr{Y}^k} \Big(  \delta^{k(\omega/{p_1}-\omega/{p_2})}\mu ( Q_\alpha^k )^{1/{p_2}-1/2} |b_\alpha^k| \Big)^{p_2} \bigg]^{q/{p_2}}
           \bigg\}^{1/q}
         \\& =& C \bigg\{\sum_{k\in \mathbb{Z}}\delta^{-ks_2q}\bigg[ \sum_{ \alpha\in \mathscr{Y}^k} \Big( \mu ( Q_\alpha^k )^{1/{p_2}-1/2} |b_\alpha^k| \Big)^{p_2} \bigg]^{q/{p_2}}
           \bigg\}^{1/q}\\ &=& C\|\{b_\alpha^k\}\|_{\dot{b}^{s_2, q}_{p_2}(X)},
\end{eqnarray*}
where the lower bound condition $\mu( Q_{\alpha_0}^{k_0} )\geq C\delta^{k_0\omega}$ together with the facts that ${{1}\over{p_1}}-{{1}\over{p_2}}\leq 0$ and $s_1-\omega/p_1=s_2-\omega/p_2$ are used in the last inequality and equality, respectively.

Repeating the same proof implies that if $\mu( Q_{\alpha}^{k} )\geq C\delta^{k\omega}$, for any $ k\in\mathbb{Z}^+,\ \ \alpha\in\mathscr{Y}^{k}$, then
\begin{eqnarray*}
\|\{b_\alpha^k\}\|_{{b}^{s_1, q}_{p_1}}\leq C \|\{b_\alpha^k\}\|_{ {b}^{s_2, q}_{p_2}}.
\end{eqnarray*}

To show the `` if" part of Theorem \ref{embedding} for sequence spaces in $ \dot{f}^{s, q}_{p}(X),$
by the homogeneity of the norm $\|\cdot\|_{\dot{f}^{s_2, q_2}_{p_2}(X)}$, we may suppose $\|\{\lambda_\alpha^k\}\|_{\dot{f}^{s_2, q_2}_{p_2}(X)}=1$ without loss of generality. Since
$$ \|\lambda_{\tau}^j\|_{{\dot{f}^{s_1, q_1}_{p_1}(X)}}^{p_1}
      = p_1\int_0^\infty t^{p_1-1}\mu\Big(\Big\{x:\Big\{\sum_{j=-\infty}^\infty\sum \limits_{\tau \in \mathscr{Y}^j}\delta^{-js_1q_1}\Big(
        \mu ( Q_\tau^j)^{-1/2}|\lambda_{\tau}^j |
        \chi_{{Q}^{{j}}_{\tau }}(x) \Big)^{q_1}
                        \Big\}^{1/q_1}>t\Big\}\Big)dt,$$
the point of departure of the proof is to estimate the following distribution function
$$
\mu\Big\{x:\ \Big\{\sum_{j=-\infty}^\infty\sum_{\tau \in \mathscr{Y}^j}\delta^{-js_1q_1}\Big(\mu ( Q_\tau^j)^{-1/2}|\lambda_{\tau}^j |\chi_{{Q}^{j}_{\tau }}(x) \Big)^{q_1}                        \Big\}^{1/q_1}>t\Big\}.
$$
To this end, note that by the orthogonality of wavelets $\psi_{\alpha}^k,$ we have
\begin{eqnarray}\label{4.3}
|\lambda_\tau^j| = \Big|\sum_{k\in\mathbb{Z}}\sum_{\alpha \in \mathscr{Y}^k}\lambda_\alpha^k
            \langle\psi_{\alpha}^k,\psi_{\tau}^j \rangle\Big|.
\end{eqnarray}

To estimate $\lambda_\tau^j,$ the key point is to replace the orthogonal estimate for $\langle\psi_{\alpha}^k,\psi_{\tau}^j \rangle$ by the following almost orthogonal estimate(see \cite{HLW} for the proof) for $j, k\in\mathbb{Z}, \alpha \in \mathscr{Y}^k, \tau \in \mathscr{Y}^j, \gamma>0$ and any $\epsilon: 0<\epsilon<\eta,$
\begin{eqnarray}\label{4.4}
&&\Big|\langle
            \psi_{\alpha}^k,\psi_{\tau}^j \rangle\Big|\\
            & \leq& C \delta^{| k - j | \epsilon}
\frac{\mu(Q_{\tau}^j)^{1\over2}
\mu(Q_{\alpha}^k)^{1\over 2}}{ V_{\delta^{(k \wedge j)}} (x_{\alpha}^k) +
V_{\delta^{(k \wedge j)}} (x_{\tau}^j) +V (x_{\alpha}^k,x_{\tau}^j)} \bigg(
\frac{\delta^{(k \wedge j)}}{ \delta^{(k \wedge j)} + d (x_{\alpha}^k,x_{\tau}^j) }
\bigg)^\gamma.\nonumber
\end{eqnarray}
We obtain that there exists a constant $C$ such that
\begin{eqnarray}\label{4.5}
&&   \sum \limits_{\tau \in \mathscr{Y}^j}
        \mu ( Q_\tau^j )^{-1/2}| \lambda_{\tau}^j |
        \chi_{{Q}^{{j}}_{\tau }}(x)  \nonumber
     \\    &\leq &C
         \sum_{k\in\mathbb{Z}} \sum \limits_{\tau \in \mathscr{Y}^j}
\sum_{\alpha \in \mathscr{Y}^k}|\lambda_{\alpha}^k |\chi_{{Q}^{{j}}_{\tau }}(x)
        \\&&\times\delta^{| k - j | \epsilon}
\frac{\mu(Q_{\alpha}^k)^{1\over2}}{V_{\delta^{(k \wedge j)}} (x_{\alpha}^k) + V_{\delta^{(k \wedge j)}} (x_{\tau}^j) +V (x_{\alpha}^k,x_{\tau}^j)} \bigg(
\frac{\delta^{(k \wedge j)}}{ \delta^{(k \wedge j)} + d (x_{\alpha}^k,x_{\tau}^j) }
\bigg)^\gamma.\nonumber
\end{eqnarray}

Now we claim that for $r\leq1$,
\begin{eqnarray}\label{4.6}
&&          \sum \limits_{\tau \in \mathscr{Y}^j}
\sum_{\alpha \in \mathscr{Y}^k}
        \frac{\mu(Q_{\alpha}^k)^{1\over2}}{V_{\delta^{(k \wedge j)}} (x_{\alpha}^k) + V_{\delta^{(k \wedge j)}} (x_{\tau}^j) +V (x_{\alpha}^k,x_{\tau}^j)} \bigg(
\frac{\delta^{(k \wedge j)}}{ \delta^{(k \wedge j)} + d (x_{\alpha}^k,x_{\tau}^j) }
\bigg)^\gamma|\lambda_{\alpha}^k |\chi_{{Q}^{{j}}_{\tau }}(x)
\\    &\leq &C\delta^{k\omega(1-\frac{1}{r})}\mu(B)^{\frac{1}{r}- 1}\inf\limits_{y\in B}\Big\{M\Big(\sum_{\alpha \in \mathbb{N}}\mu(Q_{\alpha}^k)^{-r/2}|\lambda_{\alpha}^k |^r\chi_{{Q}^{{k}}_{\alpha }}\Big)(y)\Big\}^\frac{1}{r},\nonumber
\end{eqnarray}
where $B=B(x,\delta^{k\wedge j})$ and $M$ is the Hardy--Littlewood maximal function.

To prove \eqref{4.6}, for $x\in \chi_{{Q}^{{j}}_{\tau }}$ first replacing $V_{\delta^{(k \wedge j)}} (x_{\alpha}^k) + V_{\delta^{(k \wedge j)}} (x_{\tau}^j) +V (x_{\alpha}^k,x_{\tau}^j)$ and $\delta^{(k \wedge j)} + d (x_{\alpha}^k,x_{\tau}^j)$ by $V_{\delta^{(k \wedge j)}} (x_{\alpha}^k) + V_{\delta^{(k \wedge j)}} (x) +V (x_{\alpha}^k,x)$ and $\delta^{(k \wedge j)} + d (x_{\alpha}^k,x),$ respectively, and then taking sum over $\tau \in \mathscr{Y}^j$ together with the fact $r\leq 1$ yield
\begin{eqnarray*}
&& \sum \limits_{\tau \in \mathscr{Y}^j}
\sum_{\alpha \in \mathscr{Y}^k}
        \frac{\mu(Q_{\alpha}^k)^{1\over2}}{V_{\delta^{(k \wedge j)}} (x_{\alpha}^k) + V_{\delta^{(k \wedge j)}} (x_{\tau}^j) +V (x_{\alpha}^k,x_{\tau}^j)} \bigg(
\frac{\delta^{(k \wedge j)}}{ \delta^{(k \wedge j)} + d (x_{\alpha}^k,x_{\tau}^j) }
\bigg)^\gamma| \lambda_{\alpha}^k |\chi_{{Q}^{{j}}_{\tau }}(x)
\\
&\leq &C\Big\{\sum_{\alpha \in \mathscr{Y}^k}
        \mu(Q_{\alpha}^k)^{{1\over2}r}\Big[\frac{1}{V_{\delta^{(k \wedge j)}} (x_{\alpha}^k) + V_{\delta^{(k \wedge j)}} (x) +V (x_{\alpha}^k,x)} \bigg(
\frac{\delta^{(k \wedge j)}}{ \delta^{(k \wedge j)} + d (x_{\alpha}^k,x) }
\bigg)^\gamma\Big]^r |\lambda_{\alpha}^k |^r\Big\}^\frac{1}{r}
\\
&\leq&C\Big\{ \int_{X} \sum_{\alpha \in \mathscr{Y}^k}
        \mu(Q_{\alpha}^k)^{r-1}\Big[\frac{1}{V_{\delta^{(k \wedge j)}} (y) + V_{\delta^{(k \wedge j)}} (x) +V (y,x)} \bigg(
\frac{\delta^{(k \wedge j)}}{ \delta^{(k \wedge j)} + d (y,x) }
\bigg)^\gamma\Big]^r\\
&&\times \mu(Q_{\alpha}^k)^{-\frac{r}{2}}|\lambda_{\alpha}^k |^r\chi_{{Q}^{{k}}_{\alpha }}(y)d\mu(y)\Big\}^\frac{1}{r}.
\end{eqnarray*}

Note that $C\delta^{k\omega}\leq  \mu(Q_{\alpha}^k)$ and $r\leq 1$ imply that the last term above is bounded by
\begin{eqnarray*}
&&   C\delta^{k\omega(1 -{1\over r})}\Big\{
\int_{X} \sum_{\alpha \in \mathscr{Y}^k}\Big[\frac{1}{V_{\delta^{(k \wedge j)}} (y) + V_{\delta^{(k \wedge j)}} (x) +V (y,x)} \bigg(
\frac{\delta^{(k \wedge j)}}{ \delta^{(k \wedge j)} + d (y,x) }
\bigg)^\gamma\Big]^r\\
&&\times \mu(Q_{\alpha}^k)^{-\frac{r}{2}}|\lambda_{\alpha}^k |^r\chi_{{Q}^{{k}}_{\alpha }}(y)d\mu(y)\Big\}^\frac{1}{r}
\\&\leq& C\delta^{k\omega(1-\frac{1}{r})}\Big\{  \int_{B}
\sum_{\alpha \in \mathscr{Y}^k}\Big[\frac{1}{V_{\delta^{(k \wedge j)}} (y) + V_{\delta^{(k \wedge j)}} (x) +V (y,x)} \bigg(
\frac{\delta^{(k \wedge j)}}{ \delta^{(k \wedge j)} + d (y,x) }
\bigg)^\gamma\Big]^r\\
&&\times \mu(Q_{\alpha}^k)^{-\frac{r}{2}}|\lambda_{\alpha}^k |^r\chi_{{Q}^{{k}}_{\alpha }}(y)d\mu(y)\Big\}^\frac{1}{r}
\\&&+\delta^{k\omega(1-\frac{1}{r})}\Big\{
\sum_{m=0}^\infty\int_{[\delta^{-(m+1)}B]\setminus[ \delta^{-m}B]} \sum_{\alpha \in \mathscr{Y}^k}\Big[\frac{1}{V_{\delta^{(k \wedge j)}} (y) + V_{\delta^{(k \wedge j)}} (x) +V (y,x)} \\
&&\hskip.5cm\times \bigg(
\frac{\delta^{(k \wedge j)}}{ \delta^{(k \wedge j)} + d (y,x) }
\bigg)^\gamma\Big]^r\mu(Q_{\alpha}^k)^{-\frac{r}{2}}|\lambda_{\alpha}^k |^r\chi_{{Q}^{{k}}_{\alpha }}(y)d\mu(y)\Big\}^\frac{1}{r}
\\
&=:&H_1+H_2.
\end{eqnarray*}

For any $x'\in B,$ we have
\begin{eqnarray*}
H_1&\leq& C\delta^{k\omega(1-\frac{1}{r})}\Big\{\mu(B)^{1- r}\frac{1}{\mu(B) }
\int_{B}\sum_{\alpha \in \mathscr{Y}^k}\mu(Q_{\alpha}^k)^{-\frac{r}{2}}|\lambda_{\alpha}^k |^r\chi_{{Q}^{{k}}_{\alpha }}(y)d\mu(y)\Big\}^\frac{1}{r}
\\
&\leq & C\delta^{k\omega(1-\frac{1}{r})}\mu(B)^{\frac{1}{r}- 1}\Big\{
M\Big(\sum_{\alpha \in \mathscr{Y}^k}\mu(Q_{\alpha}^k)^{-\frac{1}{2}}|\lambda_{\alpha}^k |^r\chi_{{Q}^{{k}}_{\alpha }}\Big)(x')\Big\}^\frac{1}{r}.
\end{eqnarray*}

To estimate the term $H_2,$ applying the doubling property on the measure $\mu$ yields
\begin{eqnarray*}
&&  \int_{[\delta^{-(m+1)}B]\setminus[ \delta^{-m}B]}\sum_{\alpha \in \mathscr{Y}^k}\Big[\frac{1}{V_{\delta^{(k \wedge j)}} (y) + V_{\delta^{(k \wedge j)}} (x) +V (y,x)} \bigg(
\frac{\delta^{(k \wedge j)}}{ \delta^{(k \wedge j)} + d (y,x) }
\bigg)^\gamma\Big]^r\\
&&\times \mu(Q_{\alpha}^k)^{-\frac{r}{2}}|\lambda_{\alpha}^k |^r\chi_{{Q}^{{k}}_{\alpha }}(y)d\mu(y)
\\
&\leq & C\delta^{m\gamma r} \mu(\delta^{-m}B)^{1-r}{1\over \mu(\delta^{-m}B)}\int_{\delta^{-(m+1)}B}\sum_{\alpha \in \mathscr{Y}^k}\mu(Q_{\alpha}^k)^{-\frac{r}{2}}|\lambda_{\alpha}^k |^r\chi_{{Q}^{{k}}_{\alpha }}(y)d\mu(y)
\\
&\leq & C\delta^{m\gamma r}\mu(\delta^{-m}B)^{1- r}M\Big(\sum_{\alpha \in \mathscr{Y}^k}\mu(Q_{\alpha}^k)^{-\frac{r}{2}}|\lambda_{\alpha}^k |^r\chi_{{Q}^{{k}}_{\alpha }}\Big)(x')
\\
&\leq & C\delta^{m[\gamma r-\omega(1- r)]} \mu(B)^{1- r} M\Big(\sum_{\alpha \in \mathscr{Y}^k}\mu(Q_{\alpha}^k)^{-\frac{r}{2}}|\lambda_{\alpha}^k |^r\chi_{{Q}^{{k}}_{\alpha }}\Big)(x').
\end{eqnarray*}
If $\gamma$ is chosen so that $\gamma r-\omega(1-r)>0,$ then
\begin{eqnarray*}
H_2&\leq& C\delta^{k\omega(1-\frac{1}{r})}\mu(B)^{\frac{1}{r}-1}\Big\{
M\Big(\sum_{\alpha \in \mathscr{Y}^k}\mu(Q_{\alpha}^k)^{-\frac{r}{2}}|\lambda_{\alpha}^k |^r\chi_{{Q}^{{k}}_{\alpha }}\Big)(x')\Big\}^\frac{1}{r}.
\end{eqnarray*}

This implies that \begin{eqnarray*}
&& \sum \limits_{\tau \in \mathscr{Y}^j}
\sum_{\alpha \in \mathscr{Y}^k}
        \frac{\mu(Q_{\alpha}^k)^{\frac{1}{2}}}{V_{\delta^{(k \wedge j)}} (x_{\alpha}^j) + V_{\delta^{(k \wedge j)}} (x_{\tau}^j) +V (x_{\alpha}^k,x_{\tau}^j)} \bigg(
\frac{\delta^{(k \wedge j)}}{ \delta^{(k \wedge j)} + d (x_{\alpha}^k,x_{\tau}^j) }
\bigg)^\gamma|\lambda_{\alpha}^k |\chi_{{Q}^{{j}}_{\tau }}(x)
\\&\leq& C\delta^{k\omega(1-\frac{1}{r})}\mu(B)^{\frac{1}{r}-1}\Big\{
M\Big(\sum_{\alpha \in \mathscr{Y}^k}\mu(Q_{\alpha}^k)^{-\frac{r}{2}}|\lambda_{\alpha}^k |^r\chi_{{Q}^{{k}}_{\alpha }}\Big)(x')\Big\}^\frac{1}{r}.
\end{eqnarray*}
Taking infimum for $x'$ over $B$ implies the claim \eqref{4.6}.

The estimate \eqref{4.5} together with the estimate \eqref{4.6} yields
\begin{eqnarray*}
&&\sum \limits_{\tau \in \mathscr{Y}^j}
        \mu ( Q_\tau^j )^{-1/2}|\lambda_{\tau}^j |
        \chi_{{Q}^{{j}}_{\tau }}(x)  \nonumber
     \\    &\leq &C
         \sum_{k\in\mathbb{Z}} \delta^{| k - j | \epsilon}\delta^{k\omega(1-\frac{1}{r})}\mu(B)^{\frac{1}{r}-1}\inf_{y\in B}\Big\{
M\Big(\sum_{\alpha \in \mathscr{Y}^k}\mu(Q_{\alpha}^k)^{-\frac{r}{2}}| \lambda_{\alpha}^k |^r\chi_{{Q}^{{k}}_{\alpha }}\Big)(y)\Big\}^\frac{1}{r}
\\    &\leq &C
         \sum_{k\in\mathbb{Z}} \delta^{| k - j | \epsilon}\delta^{k\omega(1-\frac{1}{r})}\mu(B)^{\frac{1}{r}-1}\delta^{ks_2}\inf_{y\in B}\Big\{
M\Big(\sum_{\alpha \in \mathscr{Y}^k}\delta^{-ks_2r}\mu(Q_{\alpha}^k)^{-\frac{r}{2}}|\lambda_{\alpha}^k |^r\chi_{{Q}^{{k}}_{\alpha }}\Big)(y)\Big\}^\frac{1}{r}.\nonumber
\end{eqnarray*}
Choosing $r$ so that $r<\min\{p_2,q_2,1\}$ and denoting
$$
F_k(y)=\Big\{
M\Big(\sum_{\alpha \in \mathscr{Y}^k}\delta^{-ks_2r}\mu(Q_{\alpha}^k)^{-\frac{r}{2}}|\lambda_{\alpha}^k |^r\chi_{{Q}^{{k}}_{\alpha }}\Big)(y)\Big\}^\frac{1}{r},$$
we have
\begin{eqnarray*}
&&  \Big\{\sum_{j=-\infty}^N
                    \sum \limits_{\tau \in \mathscr{Y}^j}\delta^{-js_1q_1}\Big(
        \mu ( Q_\tau^j )^{-1/2}|\lambda_{\tau}^j |
        \chi_{{Q}^{{j}}_{\tau }}(x) \Big)^{q_1}
                        \Big\}^{1/q_1}
     \\    &\leq &C\Big\{\sum_{j=-\infty}^N\Big(
                    \sum_{k\in\mathbb{Z}} \delta^{| k - j | \epsilon}\delta^{k\omega(1-\frac{1}{r})}\mu(B)^{\frac{1}{r}-1}\delta^{-js_1}
                    \delta^{ks_2}\inf_{y\in B}F_k(y) \Big)^{q_1}
                        \Big\}^{1/q_1}.
\end{eqnarray*}
Applying the Fefferman--Stein vector-valued maximal function
inequality \cite{FS} for $r<\min\{p_2,q_2\}$ together with the fact that $\|\{\lambda_\alpha^k\}\|_{\dot{f}^{s_2, q_2}_{p_2}(X)}=1$ yields
\begin{eqnarray*}
\inf_{y\in B}F_k(y) &\leq & \inf_{y\in B}\Big\{
                    \sum_{k\in\mathbb{Z}}(F_{k}(y) )^{q_2}
                        \Big\}^{1/q_2}
                        \\&\leq &C\Big\{\mu(B)^{-1}\int_B
                    \Big(\sum_{k\in\mathbb{Z}}(F_{k}(x) )^{q_2}\Big)^{{p_2}/{q_2}}d\mu(x)
                        \Big\}^{1/p_2}
                         \\&\leq &C\mu(B)^{-1/{p_2}}\Big\|
                    \Big(\sum_{k\in\mathbb{Z}}(F_{k}(y) )^{q_2}\Big)^{{1}/{q_2}}
                        \Big\|_{p_2}
                        \\&\leq &C\mu(B)^{-1/{p_2}}\Big\|
                    \Big\{\sum_{k\in\mathbb{Z}}\Big(\sum_{\alpha \in \mathscr{Y}^{k}}\delta^{-ks_2}\mu(Q_{\alpha}^{k})^{-\frac{1}{2}}|\lambda_{\alpha}^{k} |\chi_{{Q}^{{k}}_{\alpha }}\Big)^{q_2}\Big\}^{{1}/{q_2}}
                        \Big\|_{p_2}\\&\leq &C\mu(B)^{-1/{p_2}}.
\end{eqnarray*}We obtain
 \begin{eqnarray*}
&&  \Big\{\sum_{j=-\infty}^N
                    \sum \limits_{\tau \in \mathscr{Y}^j}\delta^{-js_1q_1}\Big(
        \mu ( Q_\tau^j )^{-1/2}|\lambda_{\tau}^j |
        \chi_{{Q}^{{j}}_{\tau }}(x) \Big)^{q_1}
                        \Big\}^{1/q_1}
     \\    &\leq &C\Big\{\sum_{j=-\infty}^N\Big(
                    \sum_{k\in\mathbb{Z}} \delta^{| k - j | \epsilon}\delta^{k\omega(1-\frac{1}{r})}\mu(B)^{\frac{1}{r}-1}
                    \delta^{-js_1}\delta^{ks_2}\mu(B)^{-1/{p_2}}\Big)^{q_1}
                        \Big\}^{1/q_1}.
\end{eqnarray*}
The crucial point here is that we can choose $r$ such that $\frac{1}{r}-1={1\over p_2}$ and hence $\mu(B)^{\frac{1}{r}-1-{{1}\over{p_2}}}=1.$ Note that $r={{p_2}\over{1+p_2}}<p_2.$ It suffices to show that ${ \dot{f}}^{s_1,q_1}_{p_1}\hookrightarrow
{\dot{f}}^{s_2, q_2}_{p_2}$ for any $q_2>r={{p_2}\over{1+p_2}}$ since ${ \dot{f}}^{s_2,q}_{p_2}\hookrightarrow
{\dot{f}}^{s_2, q_2}_{p_2}$ holds for any $0<q<q_2.$
Under these assumptions with $q_2>r={{p_2}\over{1+p_2}}$, we have
\begin{eqnarray}\label{4.7}
&&  \Big\{\sum_{j=-\infty}^N
                    \sum \limits_{\tau \in \mathscr{Y}^j}\delta^{-js_1q_1}\Big(
        \mu ( Q_\tau^j )^{-1/2}|\lambda_{\tau}^j |
        \chi_{{Q}^{{j}}_{\tau }}(x) \Big)^{q_1}
                        \Big\}^{1/q_1}\nonumber
     \\    &\leq &C\Big\{\sum_{j=-\infty}^N\Big(
                    \sum_{k\in\mathbb{Z}} \delta^{| k - j | \epsilon}\delta^{k\omega(1-\frac{1}{r})}\delta^{-js_1}\delta^{ks_2}\Big)^{q_1}
                        \Big\}^{1/q_1}\nonumber
                        \\    &= &C\Big\{\sum_{j=-\infty}^N\Big(
                    \sum_{k\in\mathbb{Z}} \delta^{| k - j | \epsilon}\delta^{-k\frac{\omega }{p_2}}\delta^{-js_1}\delta^{ks_2}\Big)^{q_1}
                        \Big\}^{1/q_1}\\
                           &\leq &C\Big\{\sum_{j=-\infty}^N \delta^{\frac{-j\omega }{p_1}}\Big(
                    \sum_{k\in\mathbb{Z}}\delta^{|j - k| \epsilon} \delta^{(k-j)(s_1-\frac{\omega }{p_1})}\Big)^{q_1}
                        \Big\}^{1/q_1}\nonumber\\
                           &\leq &C \delta^{\frac{-N\omega }{p_1}},\nonumber
\end{eqnarray}
where we choose $\epsilon$ so that $-\eta<-\epsilon<s_1-\frac{\omega }{p_1}<\epsilon<\eta$. On the other hand, applying H\"older inequality with ${{q_2}\over{q_1}}>1$ and ${{q_2}\over{q_1}}$ inequality with ${{q_2}\over{q_1}}\leq 1$ implies that
\begin{eqnarray}\label{4.8}
&&  \Big\{\sum_{j=N+1}^\infty
                    \sum \limits_{\tau \in \mathscr{Y}^j}\delta^{-js_1q_1}\Big(
        \mu ( Q_\tau^j )^{-1/2}|\lambda_{\tau}^j |
        \chi_{{Q}^{{j}}_{\tau }}(x) \Big)^{q_1}
                        \Big\}^{1/q_1}\nonumber
     \\    &= &\Big\{\sum_{j=N+1}^\infty
\delta^{j(s_2-s_1)q_1}\Big(\sum \limits_{\tau \in \mathscr{Y}^j}\delta^{-js_2}
        \mu ( Q_\tau^j )^{-1/2}|\lambda_{\tau}^j |
        \chi_{{Q}^{{j}}_{\tau }}(x) \Big)^{q_1}
                        \Big\}^{1/q_1}
                        \\    &\leq& C
\delta^{N(\frac{\omega}{p_2}-\frac{\omega}{p_1})}\Big\{\sum_{j=N+1}^\infty
\sum \limits_{\tau \in \mathscr{Y}^j}\delta^{-js_2q_2}\Big(
        \mu ( Q_\tau^j )^{-1/2}|\lambda_{\tau}^j |
        \chi_{{Q}^{{j}}_{\tau }}(x) \Big)^{q_2}
                        \Big\}^{1/q_2},\nonumber
\end{eqnarray}
where we use the fact that $s_2-s_1=\frac{\omega}{p_2}-\frac{\omega}{p_1}>0.$
From \eqref{4.7} and \eqref{4.8}, it follows that
 \begin{eqnarray*}&&\|\lambda_{\tau}^j\|_{{\dot{f}^{s_1, q_1}_{p_1}(X)}}^{p_1}
      \\ &= &p_1\sum_{N=-\infty}^{\infty}\int_{2C\delta^{-\omega N/p_1}2^{1/q_1}}^{2C\delta^{-\omega(N+1)/p_1}2^{1/q_1}} t^{p_1-1}
                        \\&&\times
                        \mu\Big(\Big\{x:\Big\{\sum_{j=-\infty}^\infty\sum \limits_{\tau \in \mathscr{Y}^j}\delta^{-js_1q_1}\Big(
        \mu ( Q_\tau^j )^{-1/2}|\lambda_{\tau}^j |
        \chi_{{Q}^{{j}}_{\tau }}(x) \Big)^{q_1}
                        \Big\}^{1/q_1}>t\Big\}\Big)dt
        \\&\leq &p_1\sum_{N=-\infty}^{\infty}\int_{2C\delta^{-\omega N/p_1}2^{1/q_1}}^{2C\delta^{-\omega(N+1)/p_1}2^{1/q_1}} t^{p_1-1}
                        \\&&\times\mu\Big(\Big\{x:\Big\{\sum_{j=N+1}^\infty\sum \limits_{\tau \in \mathscr{Y}^j}\delta^{-js_1q_1}\Big(
        \mu ( Q_\tau^j )^{-1/2}|\lambda_{\tau}^j |
        \chi_{{Q}^{{j}}_{\tau }}(x) \Big)^{q_1}
                        \Big\}^{1/q_1}>2^{-1/q_1}t/2\Big\}\Big)dt
                        \\&\leq &p_1\sum_{N=-\infty}^{\infty}\int_{2C\delta^{-\omega N/p_1}2^{1/q_1}}^{2C\delta^{-\omega(N+1)/p_1}2^{1/q_1}} t^{p_1-1}
                        \\&&\times\mu\Big(\Big\{x:\Big\{\sum_{j=-\infty}^\infty\sum \limits_{\tau \in \mathscr{Y}^j}\delta^{-js_2q_2}\Big(
        \mu ( Q_\tau^j )^{-1/2}|\lambda_{\tau}^j |
        \chi_{{Q}^{{j}}_{\tau }}(x) \Big)^{q_2}
                        \Big\}^{1/q_2}>C\delta^{N(\frac{\omega}{p_1}-\frac{\omega}{p_2})}2^{-1/q_1}t/2\Big\}\Big)dt
                        \\&\leq &p_1\int_0^\infty  t^{p_1-1}
                        \\&&\times\mu\Big(\Big\{x:\Big\{\sum_{j=-\infty}^\infty\sum \limits_{\tau \in \mathscr{Y}^j}\delta^{-js_2q_2}\Big(
        \mu ( Q_\tau^j )^{-1/2}| \lambda_{\tau}^j |
        \chi_{{Q}^{{j}}_{\tau }}(x) \Big)^{q_2}
                        \Big\}^{1/q_2}>Ct^{p_1/p_2}\Big\}\Big)dt
                        \\&\leq &p_2\int_0^\infty u^{p_2-1}
                        \\&&\times\mu\Big(\Big\{x:\Big\{\sum_{j=-\infty}^\infty\sum \limits_{\tau \in \mathscr{Y}^j}\delta^{-js_2q_2}\Big(
        \mu ( Q_\tau^j )^{-1/2}| \lambda_{\tau}^j |
        \chi_{{Q}^{{j}}_{\tau }}(x) \Big)^{q_2}
                        \Big\}^{1/q_2}>Cu\Big\}\Big)du
                       \\&\leq& C \|\lambda_{\tau}^j\|_{{\dot{f}^{s_2, q_2}_{p_2}(X)}}^{p_2}.
\end{eqnarray*}
The proof of the ``if''-part  of Theorem \ref{embedding} for ${\dot{f}^{s, q}_{p}(X)}$ is concluded.

The similar proof yields that the locally lower bound condition on the measure $\mu$ implies ${ {f}}^{s_1,q_1}_{p_1}\hookrightarrow
{{f}}^{s_2, q_2}_{p_2}$. In fact, we have
\begin{eqnarray}\label{4.71}
\Big\{\sum_{j=0}^N
                    \sum \limits_{\tau \in \mathscr{Y}^j}\delta^{-js_1q_1}\Big(
        \mu ( Q_\tau^j )^{-1/2}|\lambda_{\tau}^j |
        \chi_{{Q}^{{j}}_{\tau }}(x) \Big)^{q_1}
                        \Big\}^{1/q_1}
                           &\leq &C \delta^{\frac{-N\omega }{p_1}}
\end{eqnarray}and for $N\geq -1,$
\begin{eqnarray}\label{4.81}
&&\Big\{\sum_{j=N+1}^\infty
                    \sum \limits_{\tau \in \mathscr{Y}^j}\delta^{-js_1q_1}\Big(
        \mu ( Q_\tau^j )^{-1/2}|\lambda_{\tau}^j |
        \chi_{{Q}^{{j}}_{\tau }}(x) \Big)^{q_1}
                        \Big\}^{1/q_1}\nonumber
        \\ &\leq&
\delta^{N(\frac{\omega}{p_2}-\frac{\omega}{p_1})}\Big\{\sum_{j=N+1}^\infty
\sum \limits_{\tau \in \mathscr{Y}^j}\delta^{-js_2q_2}\Big(
        \mu ( Q_\tau^j )^{-1/2}|\lambda_{\tau}^j |
        \chi_{{Q}^{{j}}_{\tau }}(x) \Big)^{q_2}
                        \Big\}^{1/q_2}.
\end{eqnarray}

We write
 \begin{eqnarray*}&&\|\lambda_{\tau}^j\|_{{f}^{s_1, q_1}_{p_1}(X)}
   \\ &= &p_1\int_0^{2C2^{1/q_1}} t^{p_1-1}\mu\Big\{x:\Big\{\sum_{j=0}^\infty\sum \limits_{\tau \in \mathscr{Y}^j}\delta^{-js_1q_1}\Big(
        \mu ( Q_\tau^j )^{-1/2}|\lambda_{\tau}^j |
        \chi_{{Q}^{{j}}_{\tau }}(x) \Big)^{q_1}
                        \Big\}^{1/q_1}>t\Big\}dt
                        \\ &&+p_1\sum_{N=0}^{\infty}\int_{2C\delta^{-\omega N/p_1}2^{1/q_1}}^{2C\delta^{-\omega(N+1)/p_1}2^{1/q_1}} t^{p_1-1}
                        \\&&\times
                        \mu\Big(\Big\{x:\Big\{\sum_{j=0}^\infty\sum \limits_{\tau \in \mathscr{Y}^j}\delta^{-js_1q_1}\Big(
        \mu ( Q_\tau^j )^{-1/2}|\lambda_{\tau}^j |
        \chi_{{Q}^{{j}}_{\tau }}(x) \Big)^{q_1}
                        \Big\}^{1/q_1}>t\Big\}\Big)dt
                        \\&:=&H+I.
                       \end{eqnarray*}

        We only need to estimate $H$ since the proof for $I$ is the same as the above. By \eqref{4.81} with $N=-1$,
        \begin{eqnarray*}
&&\Big\{\sum_{j=0}^\infty
                    \sum \limits_{\tau \in \mathscr{Y}^j}\delta^{-js_1q_1}\Big(
        \mu ( Q_\tau^j )^{-1/2}|\lambda_{\tau}^j |
        \chi_{{Q}^{{j}}_{\tau }}(x) \Big)^{q_1}
                        \Big\}^{1/q_1}\nonumber
        \\ &\leq&
\delta^{(\frac{\omega}{p_1}-\frac{\omega}{p_2})}\Big\{\sum_{j=0}^\infty
\sum \limits_{\tau \in \mathscr{Y}^j}\delta^{-js_2q_2}\Big(
        \mu ( Q_\tau^j )^{-1/2}|\lambda_{\tau}^j |
        \chi_{{Q}^{{j}}_{\tau }}(x) \Big)^{q_2}
                        \Big\}^{1/q_2}.
\end{eqnarray*}
       Therefore, we obtain
\begin{eqnarray*}H&\lesssim & p_1\int_{0}^{2C2^{1/q_1}}t^{p_1-1}
                                 \mu\Big\{x:\Big\{\sum_{j=0}^\infty\sum \limits_{\tau \in \mathscr{Y}^j}\delta^{-js_2q_2}\Big(
        \mu ( Q_\tau^j )^{-1/2}|\lambda_{\tau}^j |
        \chi_{{Q}^{{j}}_{\tau }}(x) \Big)^{q_2}
                        \Big\}^{1/q_2}>\delta^{(\frac{\omega}{p_2}-\frac{\omega}{p_1})}t\Big\}dt
                        \\&\leq&  \frac{p_1}{p_2}\big(2C\delta^{(\frac{\omega}{p_1}-\frac{\omega}{p_2})}2^{1/q_1}\big)^{p_1-p_2} p_2\int_{0}^{2C\delta^{(\frac{\omega}{p_1}-\frac{\omega}{p_2})}2^{1/q_1}} t^{p_2-1}
                                 \\&&\times\mu\Big\{\sum_{j=0}^\infty\sum \limits_{\tau \in \mathscr{Y}^j}\delta^{-js_2q_2}\Big(
        \mu ( Q_\tau^j )^{-1/2}|\lambda_{\tau}^j |
        \chi_{{Q}^{{j}}_{\tau }}(x) \Big)^{q_2}
                        \Big\}^{1/q_2}>t\Big\}dt\\&                                                                    \\&\leq& C\|\lambda_{\tau}^j\|_{{f^{s_2, q_2}_{p_2}(X)}}^{p_2}.   \end{eqnarray*}

The proof of Theorem \ref{embedding} is complete.

\section{Applications }

We would like to point out that Theorem \ref{embedding} shows that geometric conditions on the measure only play a crucial role for the embedding theorem.
As an application of Theorem \ref{embedding}, we provide new embedding theorems of the Besov and Triebel-Lizorkin spaces on spaces of homogeneous type in the sense of Coifman and Weiss. For this purpose, we first recall
test functions and distributions on $(\XX,d,\mu),$ space of homogeneous type in the sense of Coifman and Weiss.
\begin{definition}\label{def-of-test-func-space}
    \textup{(Test functions, \cite{HLW})} Fix $x_0\in X$, $r > 0$, $\gamma
    > 0$ and $\beta\in(0,\eta)$ where $\eta$ is the regularity
    exponent from Theorem \ref{theorem AH orth basi}. A
    function $f$ defined on~$X$ is said to be a {\it test
    function of type $(x_0,r,\beta,\gamma)$ centered at $x_0\in
    X$} if $f$ satisfies the following three conditions.
    \begin{enumerate}
        \item[(i)] \textup{(Size condition)} For all $x\in
            X$,
            \[
                |f(x)|
                \leq C \,\frac{1}{V_{r}(x_0) + V(x,x_0)}
                \Big(\frac{r}{r + d(x,x_0)}\Big)^{\gamma}.
            \]

        \item[(ii)] \textup{(H\"older regularity
            condition)} For all $x$, $y\in X$ with $d(x,y)
            < (2A_0)^{-1}(r + d(x,x_0))$,
            \[
                |f(x) - f(y)|
                \leq C \Big(\frac{d(x,y)}{r + d(x,x_0)}\Big)^{\beta}
                \frac{1}{V_{r}(x_0) + V(x,x_0)} \,
                \Big(\frac{r}{r + d(x,x_0)}\Big)^{\gamma}.
            \]
    \end{enumerate}
\end{definition}

We denote by $G(x_0,r,\beta,\gamma)$ the set of all test
functions of type $(x_0,r,\beta,\gamma)$. The norm of $f$ in $
G(x_0,r,\beta,\gamma)$ is defined by
\[
    \|f\|_{G(x_0,r,\beta,\gamma)}
    := \inf\{C>0:\ {\rm(i)\  and \ (ii)}\ {\rm hold} \}.
\]

For each fixed $x_0$, let $G(\beta,\gamma) :=
G(x_0,1,\beta,\gamma)$. It is easy to check that for each fixed
$x_1\in X$ and $r > 0$, we have $G(x_1,r,\beta,\gamma) =
G(\beta,\gamma)$ with equivalent norms. Furthermore, it is also
easy to see that $G(\beta,\gamma)$ is a Banach space with
respect to the norm on $G(\beta,\gamma)$.

For $0 < \beta < \eta$ and $\gamma > 0$, let
$\GGs(\beta,\gamma)$ be the completion of the space
$G(\eta,\gamma)$ in the norm of $G(\beta,\gamma)$. For $f\in
\GGs(\beta,\gamma)$, define $\|f\|_{\GGs(\beta,\gamma)} :=
\|f\|_{G(\beta,\gamma)}$. Finally, let $G_0(\beta,\gamma)=\{f \in G(\beta,\gamma): \int_X f(x) d\mu(x)=0\}$ and $\GGs_0(\beta,\gamma)=\{f \in \GGs(\beta,\gamma): \int_X f(x) d\mu(x)=0\}.$

\begin{definition}
    \textup{(Distributions)} The \emph{distribution space}
    $(\GGs_0(\beta,\gamma))'$ is defined to be the set of all
    linear functionals $\mathcal{L}$ from $\GGs_0(\beta,\gamma)$
    to $\mathbb{C}$ with the property that there exists $C > 0$
    such that for all $f\in \GGs_0(\beta,\gamma)$,
    \[
        |\mathcal{L}(f)|
        \leq C\|f\|_{\GGs(\beta,\gamma)}.
    \]
    Similarly, $(\GGs(\beta,\gamma))'$ is defined to be the set of all
    linear functionals $\mathcal{L}$ from $\GGs(\beta,\gamma)$
    to $\mathbb{C}.$
\end{definition}

The  Besov and Triebel--Lizorkin spaces on $(\XX, d, \mu)$ are defined as
follows.

\begin{definition}\label{BT1}
Suppose that
    $|s|<\eta$ and $\omega$
    is the upper dimension of~$(X,d,\mu)$. Let $\psi_{\alpha}^{k}$ be a wavelet basis constructed in \cite{AH}. For $\beta\in(0,\eta)$, $\gamma>0$ and $\max\left(\frac{\omega}{\omega +  \eta},\frac{\omega}{\omega + \eta+ s}\right) < p\leq  \infty$ and $0<q\leq \infty,$
    the  Besov space $\dot{B}^{s,q}_{p}(X)$ is the collection of all $f\in (\GGs_0(\beta,\gamma))'$ such that the sequence $\{\langle\psi_\alpha^k, f \rangle\}$ belongs to $\dot{b}^{s, q}_{p}(X)$ and
    $$\|f\|_{\dot{B}^{s, q}_{p}(X)}:=\|\{\langle\psi_\alpha^k, f \rangle\}\|_{\dot{b}^{s, q}_{p}(X)}.$$
    The Besov space ${B}^{s,q}_{p}(X)$ is the collection of all $f\in (\GGs(\beta,\gamma))'$ such that the sequence $\{\langle\psi_\alpha^k, f \rangle\}$ belongs to ${b}^{s, q}_{p}(X)$ and
    $$\|f\|_{{B}^{s, q}_{p}(X)}:=\|\{\langle\psi_\alpha^k, f \rangle\}\|_{{b}^{s, q}_{p}(X)}.$$
    For $\beta\in(0, \eta), \gamma>0$ and $\max\left(\frac{\omega}{\omega +  \eta},\frac{\omega}{\omega + \eta+ s}\right) < p < \infty$, $\max\left(\frac{\omega}{\omega + \eta},\frac{\omega}{\omega + \eta+ s}\right) < q \leq\infty,$ the Triebel--Lizorkin space $\dot{F}^{s,q}_{p}(X)$ is the collection of all $f\in (\GGs_0(\beta,\gamma))'$ such that the sequence $\{\langle\psi_\alpha^k, f \rangle\}$ belongs to $\dot{f}^{s, q}_{p}(X)$ and
    $$\|f\|_{\dot{F}^{s, q}_{p}(X)}:=\|\{\langle\psi_\alpha^k, f \rangle\}\|_{\dot{f}^{s, q}_{p}(X)}.$$
    The Triebel--Lizorkin space ${F}^{s,q}_{p}(X)$ is the collection of all $f\in (\GGs(\beta,\gamma))'$ such that the sequence $\{\langle\psi_\alpha^k, f \rangle\}$ belongs to ${f}^{s, q}_{p}(X)$ and
    $$\|f\|_{{F}^{s, q}_{p}(X)}:=\|\{\langle\psi_\alpha^k, f \rangle\}\|_{{f}^{s, q}_{p}(X)}.$$
    \end{definition}
    We remark that it is routing to verify the above definition is independent of the choice of the wavelet constructed in \cite{AH}. We leave the details to the reader.

We now prove the following
\begin{theorem}\label{embedding1}
(i) Let $\max\big\{\frac{\omega}{\omega+\eta},\frac{\omega}{\omega+\eta+s_i}\big\}<p_i\leq \infty, 0<q\leq \infty,$ $i=1,2$ and $s_1\leq s_2$ with $ -\eta<s_1-\omega/p_1=s_2-\omega/p_2<\eta$. Then
$$
\|f\|_{{\dot { B}}^{s_1,q}_{p_1}}\leq C \|f\|_{{\dot {B}}^{s_2,q}_{p_2}}$$
if and only if the measure $\mu$ has the lower bound and
$$\|f\|_{{{B}}^{s_1,q}_{p_1}}\leq C \|f\|_{{{B}}^{s_2,q}_{p_2}}$$
if and only if the measure $\mu$ has the locally lower bound.

(ii) Let $\max\big\{\frac{\omega}{\omega+\eta},\frac{\omega}{\omega+\eta+s_i}\big\}<p_i< \infty$ and $\max\big\{\frac{\omega}{\omega+\eta},\frac{\omega}{\omega+\eta+s_i}\big\}<q_i\leq \infty$ for $i=1,2,$
and $s_1\leq s_2$ with $ -\eta<s_1-\omega/p_1=s_2-\omega/p_2<\eta$. Then
$$\|f\|_{{\dot {F}}^{s_1,q_1}_{p_1}}\leq C \|f\|_{{\dot {F}}^{s_2,q_2}_{p_2}}$$
if and only if $\mu$ has the lower bound and
$$\|f\|_{{{F}}^{s_1,q_1}_{p_1}}\leq C \|f\|_{{{F}}^{s_2,q_2}_{p_2}}$$
if and only if $\mu$ has the locally lower bound.
\end{theorem}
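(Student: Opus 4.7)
The plan is to reduce Theorem \ref{embedding1} to Theorem \ref{embedding} by means of the wavelet characterization recorded in Definition \ref{BT1}. Since, by definition, $\|f\|_{\dot{B}^{s,q}_p(X)} = \|\{\langle \psi_\alpha^k, f\rangle\}\|_{\dot{b}^{s,q}_p}$ and similarly for the inhomogeneous Besov space and for the two Triebel--Lizorkin spaces, both directions of the theorem transfer between the function level and the sequence level through the wavelet transform; the content of the proof is to identify the right test functions to realize (or control) the extremal sequences used in Theorem \ref{embedding}.

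For the ``if'' direction, assume $\mu$ has a lower bound (or a locally lower bound, in the inhomogeneous case). Given any $f$ in the source space of the claimed embedding, apply Theorem \ref{embedding} to the coefficient sequence $\{\lambda_\alpha^k\}=\{\langle\psi_\alpha^k,f\rangle\}$, which lies in the relevant sequence space by Definition \ref{BT1}; the sequence estimate then yields the desired function-space inequality in all four cases. The parameter restrictions of Theorem \ref{embedding1}, namely $\max\{\omega/(\omega+\eta),\omega/(\omega+\eta+s_i)\}<p_i$ (and analogously for $q_i$) together with $|s_i|<\eta$, are automatically included in the hypotheses $0<p_i\leq\infty$ (resp.\ $0<p_i<\infty$) of Theorem \ref{embedding}, so no additional work is needed.

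For the ``only if'' direction, I would fix $k_0\in\mathbb{Z}$ (or $k_0\in\mathbb{Z}^+$ for the inhomogeneous case) and $\alpha_0\in\mathscr{Y}^{k_0}$ and take $f=\psi_{\alpha_0}^{k_0}$. By the orthonormality asserted in Theorem 1, $\langle\psi_\alpha^k,f\rangle=\delta_{(k,\alpha),(k_0,\alpha_0)}$, so $f$ is precisely the function whose wavelet coefficient sequence is the single-atom sequence used in the discussion following Theorem \ref{embedding}. The hypothesized function-space inequality then collapses to the scalar inequality $\delta^{-k_0s_2}\mu(Q_{\alpha_0}^{k_0})^{1/p_2-1/2}\geq C\delta^{-k_0s_1}\mu(Q_{\alpha_0}^{k_0})^{1/p_1-1/2}$, which, combined with $s_1-\omega/p_1=s_2-\omega/p_2$, forces $\mu(Q_{\alpha_0}^{k_0})\geq C\delta^{k_0\omega}$ for every $\alpha_0\in\mathscr{Y}^{k_0}$ and every $k_0\in\mathbb{Z}$ (respectively $k_0\in\mathbb{Z}^+$). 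Invoking Propositions \ref{prop key 1} and \ref{prop key 2} in the homogeneous case, or Corollary \ref{coro key 2} in the inhomogeneous case, then upgrades this dyadic-cube lower bound to the global (respectively local) lower bound of $\mu$ on all balls. Exactly the same argument works for the Triebel--Lizorkin spaces, since the same test function $\psi_{\alpha_0}^{k_0}$ realizes the single-atom sequence in $\dot{f}^{s,q}_p$ and $f^{s,q}_p$.

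The main technical point is to verify that $\psi_{\alpha_0}^{k_0}$ actually lies in the source space $\dot{B}^{s_2,q}_{p_2}(X)$ (or its counterparts) so that the embedding is applicable to it. This is routine given the exponential decay \eqref{exponential decay}, H\"older regularity \eqref{Holder-regularity} and cancellation \eqref{cancellation} of the Auscher--Hyt\"onen basis, which place $\psi_{\alpha_0}^{k_0}$ in $\GG_0(\beta',\gamma')$ for any admissible $\beta',\gamma'$ and consequently in every Besov or Triebel--Lizorkin space in the parameter range considered; thus the application of the embedding to this test function is legitimate, and the proof is complete.
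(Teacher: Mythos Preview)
Your proposal is correct and follows essentially the same approach as the paper: the ``if'' direction is reduced to Theorem \ref{embedding} via the identity $\|f\|_{\dot{B}^{s,q}_p}=\|\{\langle\psi_\alpha^k,f\rangle\}\|_{\dot{b}^{s,q}_p}$ (and its three analogues), and the ``only if'' direction is obtained by testing on $f=\psi_{\alpha_0}^{k_0}$ and then invoking Propositions \ref{prop key 1}, \ref{prop key 2} and Corollary \ref{coro key 2}. Your explicit remark that $\psi_{\alpha_0}^{k_0}$ belongs to the source space is a point the paper leaves implicit (it simply computes the norm and finds it finite), but otherwise the arguments coincide.
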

The proof will follow from Theorem \ref{embedding}. More precisely, taking $f(x)=\psi_{\alpha_0}^{k_0}$ gives
\begin{eqnarray*}
\|f\|_{\dot{{B}}^{s_1, q}_{p_1}(X)} &=& \delta^{-k_0s_1} \mu ( Q_{\alpha_0}^{k_0} )^{1/{p_1}-1/2}.
\end{eqnarray*}
Similarly,
\begin{eqnarray*}
\|f\|_{\dot{{B}}^{s_2, q}_{p_2}(X)} & =& \delta^{-k_0s_2} \mu ( Q_{\alpha_0}^{k_0} )^{1/{p_2}-1/2}.
\end{eqnarray*}
Therefore, if
\begin{eqnarray*}
\|f\|_{\dot{{B}}^{s_1, q}_{p_1}}\leq C \|f\|_{\dot{{B}}^{s_2, q}_{p_2}},
\end{eqnarray*}
we should have $\delta^{-k_0s_2} \mu ( Q_{\alpha_0}^{k_0} )^{1/{p_2}-1/2}\geq C \delta^{-k_0s_1} \mu ( Q_{\alpha_0}^{k_0} )^{1/{p_1}-1/2}$ and this implies that $\mu( Q_{\alpha_0}^{k_0} )\geq C\delta^{k_0\omega}$, for any $ k_0\in\mathbb{Z}$ and $\alpha_0\in\mathscr{Y}^{k_0}$. Repeating the same proof implies that if
\begin{eqnarray*}
\|f\|_{{B}^{s_1, q}_{p_1}}\leq C \|f\|_{ {B}^{s_2, q}_{p_2}},
\end{eqnarray*}
then $\mu( Q_{\alpha_0}^{k_0} )\geq C\delta^{k_0\omega}$, for any $ k_0\in\mathbb{Z}^+,\ \ \alpha_0\in\mathscr{Y}^{k_0}$.

The `` only if" parts for $\dot{F}^{s, q}_{p}(X)$ and ${F}^{s, q}_{p}(X)$ can be verified similarly.

To show the the ``if" parts, by the definition of $\dot{{B}}^{s, q}_{p}$ and Theorem \ref{embedding},
$$\|f\|_{\dot{{B}}^{s_1, q}_{p_1}}=\|\{<f,\psi_\alpha^k>\}\|_{\dot{{b}}^{s_1, q}_{p_1}}\leq C \|\{<f,\psi_\alpha^k>\}\|_{\dot{{b}}^{s_2, q}_{p_2}}
=C\|f\|_{\dot{{B}}^{s_1, q}_{p_1}}.$$
The proofs for ${B}^{s, q}_{p}(X), \dot{F}^{s, q}_{p}(X)$ and ${F}^{s, q}_{p}(X)$ are same and we omt the details of the proofs.

The second application of Theorem \ref{embedding} is to give the necessary and sufficient conditions for the classical weighted Besov and Triebel-Lizorkin spaces, in particular, the weighted Sobolev spaces on $\mathbb R^n.$ We now recall these spaces.

Let $\psi$ be a non-negative function in $\mathscr{S}$ on $\mathbb R^n$ such that supp$\psi=\{ 1/2\leq |x|\leq2 \}$, $\psi(x)\geq 0$ for $1/2\leq |x|\leq2$ and
$\sum_{j=-\infty}^\infty \psi(2^{-j}x)=1$ for $|x|\not=0$.
Let $\psi_j$, $j=0,\pm1,\pm2,\ldots,$ and $\Psi$ be functions in $\mathcal{S}$ given by
$$ \hat{\psi}_j(x)= \psi(2^{-j}x),\quad \hat{\Psi}(x)=1-\sum_{j=1}^\infty \hat{\psi}_j(x).  $$
Weighted Besov and Triebel spaces are defined as follows. For $-\infty<s<\infty$ and $0<p,q\leq \infty,$
\begin{eqnarray*}
B^{s,q}_{p,w}(\mathbb R^n)=\Big\{f\in \mathcal{S}':  \|f\|_{B^{s,q}_{p,w}}= \|\Psi*f\|_{p,w}+  \{ \sum_{j=1}^\infty (2^{js}\|\psi_j*f\|_{p,w})^q\}^{1/q}<\infty \Big\}
\end{eqnarray*}
and
\begin{eqnarray*}
\dot{B}^{s,q}_{p,w}(\mathbb R^n)=\Big\{f\in \mathcal{S_\infty}':  \|f\|_{\dot{B}^{s,q}_{p,w}}=\{ \sum_{j=-\infty}^\infty (2^{js}\|\psi_j*f\|_{p,w})^q\}^{1/q}<\infty \Big\}.
\end{eqnarray*}
For $-\infty<s<\infty$, $0<p< \infty$ and $0<q\leq \infty,$
\begin{eqnarray*}
F^{s,q}_{p,w}(\mathbb R^n)=\Big\{f\in \mathcal{S}':  \|f\|_{F^{s,q}_{p,w}}=\|\Psi*f\|_{p,w}+  \|\{ \sum_{j=1}^\infty (2^{js}|\psi_j*f|)^q\}^{1/q}\|_{p,w}<\infty \Big\}\\
\end{eqnarray*}
and
\begin{eqnarray*}
\dot{F}^{s,q}_{p,w}(\mathbb R^n)=\Big\{f\in \mathcal{S_\infty}':  \|f\|_{\dot{F}^{s,q}_{p,w}}= \|\{ \sum_{j=-\infty}^\infty (2^{js}|\psi_j*f|)^q\}^{1/q}\|_{p,w}<\infty \Big\}.
\end{eqnarray*}
We would like to point out that ${F}^{s,2}_{p,w}(\mathbb R^n)$ and $\dot{F}^{s,2}_{p,w}(\mathbb R^n)$ are the weighted Sobolev spaces.

Suppose that weights $w$ satisfy Muckenhoupt $A_p$ condition, $1<p\leq\infty.$ The embedding theorem for weighted Besov and Triebel spaces with $A_p$ weights is the following
\begin{theorem}
(i) If $-\infty<s_1\leq s_0<\infty, 0<p_0\leq p_1\leq \infty$ and $s_0-n/p_0=s_1-n/p_1$, then
$ B^{s_0,q}_{p_0,w} \hookrightarrow B^{s_1,q}_{p_1,w}$ if and only if $w(B(x,r))\geq cr^n$ for all $x$ and $0<r\leq1,$ and $ \dot{B}^{s_0,q}_{p_0,w} \hookrightarrow \dot{B}^{s_1,q}_{p_1,w}$ then if and only if $w(B(x,r))\geq cr^d$ for all $x$ and $0<r<\infty$.

(ii) If $-\infty<s_1< s_0<\infty, 0<p_0< p_1\leq \infty, 0<q_1, q_2\leq\infty$ and $s_0-n/p_0=s_1-n/p_1$, then
$ F^{s_0,q_1}_{p_0,w}\hookrightarrow F^{s_1,q_2}_{p_1,w} $ if and only if $w(B(x,r))\geq cr^d$ for all $x$ and $0<r\leq1,$ and $ \dot{F}^{s_0,q_1}_{p_0,w} \hookrightarrow \dot{F}^{s_1,q_2}_{p_1,w} $ if and only if $w(B(x,r))\geq cr^d$ for all $x$ and $0<r\infty$.
\end{theorem}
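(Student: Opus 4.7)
The plan is to reduce this theorem to Theorem~\ref{embedding} (equivalently, to Theorem~\ref{embedding1}), now working on the space of homogeneous type $(\mathbb{R}^n,|\cdot|,d\mu)$ with $d\mu=w(x)\,dx$. Since $w\in A_p$, the measure $\mu$ is doubling and the Coifman--Weiss framework applies, with the dyadic parameter $\delta=1/2$ and the Euclidean exponent $\omega=n$. The classical smooth wavelets $\{\psi_{\alpha}^k\}$ on $\mathbb{R}^n$ play the role of the Auscher--Hyt\"onen basis and yield the weighted wavelet characterization
$$\|f\|_{\dot B^{s,q}_{p,w}}\sim\Big\{\sum_k 2^{ksq}\Big[\sum_\alpha\big(w(Q_\alpha^k)^{1/p-1/2}|\langle f,\psi_\alpha^k\rangle|\big)^p\Big]^{q/p}\Big\}^{1/q},$$
and analogously for $B^{s,q}_{p,w}$, $\dot F^{s,q_1}_{p,w}$ and $F^{s,q_1}_{p,w}$. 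These are precisely the sequence norms of Definition~\ref{B T} with $\mu=w\,dx$, so the embedding of function spaces is equivalent to the corresponding embedding of sequence spaces.

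For the ``only if'' direction I would imitate the argument given just after Theorem~\ref{embedding1}: testing the embedding on a single wavelet $f=\psi_{\alpha_0}^{k_0}$ yields
$$\|\psi_{\alpha_0}^{k_0}\|_{\dot B^{s_i,q}_{p_i,w}}=2^{-k_0 s_i}\,w(Q_{\alpha_0}^{k_0})^{1/p_i-1/2},\qquad i=0,1,$$
so the assumed inequality combined with $s_0-n/p_0=s_1-n/p_1$ forces $w(Q_{\alpha_0}^{k_0})\geq c\cdot 2^{-k_0 n}$ for every $k_0\in\mathbb Z$ (respectively, every $k_0\geq 0$ in the inhomogeneous case). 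Propositions~\ref{prop key 1} and \ref{prop key 2} (respectively, Corollary~\ref{coro key 2}) then promote this dyadic bound to $w(B(x,r))\geq c r^n$ for all $r>0$ (respectively, for $0<r\leq 1$). The Triebel--Lizorkin cases are handled identically, using the characteristic-function truncation of a single wavelet.

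For the ``if'' direction, the wavelet characterization recasts the embedding of function spaces as the sequence-space embedding of Theorem~\ref{embedding} with $\mu=w\,dx$, $\delta=1/2$ and $\omega=n$. Since the hypothesis $w(B(x,r))\geq c r^n$ (resp.\ its local version) is exactly the lower-bound condition required there, the sequence-space inequality follows immediately, and hence so does the function-space embedding.

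The main obstacle is a bookkeeping compatibility issue: for a general $A_p$ weight the intrinsic upper dimension of $(\mathbb{R}^n,|\cdot|,w\,dx)$ is typically $np$ rather than the Euclidean $n$, while the theorem is formulated with the exponent $n$. What has to be checked carefully is that the argument of Section~2 goes through with $\omega$ replaced by $n$. The key observation is that $\omega$ entered the ``if''-part of Theorem~\ref{embedding} only through the lower bound $\mu(Q_\alpha^k)\geq c\delta^{k\omega}$ used right after \eqref{4.6}; since we are assuming $w(Q_\alpha^k)\geq c\cdot 2^{-kn}$, this step is valid with $\omega=n$. The almost-orthogonality estimate \eqref{4.4} requires only the doubling property of $\mu$, and the weighted Fefferman--Stein maximal inequality holds for $A_p$ weights in the admissible range of $p$, so no further obstruction appears.
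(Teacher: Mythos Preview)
Your proposal for the ``only if'' direction coincides with the paper's: pass to the wavelet/sequence characterization, test on a single wavelet $\psi_{\alpha_0}^{k_0}$ to obtain $w(Q_{\alpha_0}^{k_0})\ge c\,2^{-k_0 n}$, and then promote this to balls. The paper does exactly this, using Meyer's smooth wavelets on $\mathbb R^n$ and invoking Theorem~\ref{embedding}.

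Where you diverge from the paper is in the ``if'' direction. The paper does \emph{not} attempt to deduce the weighted embeddings from Theorem~\ref{embedding}; it simply cites Bui~[B], where the sufficiency was already established by direct weighted Littlewood--Paley arguments. Your route---feeding $(\mathbb R^n,|\cdot|,w\,dx)$ back into the sequence-space machinery of Section~2 with $\delta=1/2$ and $\omega$ replaced by $n$---is a legitimate alternative, and you have correctly identified the one point that needs care: the intrinsic upper dimension of an $A_p$ weight is in general $np$, not $n$. Your resolution is essentially right. In the Besov ``if'' proof the exponent $\omega$ enters only through the lower bound $\mu(Q_\alpha^k)\ge C\delta^{k\omega}$, so replacing $\omega$ by $n$ is harmless once $w(Q)\ge c\,2^{-kn}$ is assumed. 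In the Triebel--Lizorkin ``if'' proof there is one additional place where the \emph{upper} dimension appears, namely the $H_2$ estimate $\mu(\delta^{-m}B)^{1-r}\le C\delta^{-m\omega(1-r)}\mu(B)^{1-r}$; here one must use the genuine upper dimension of $w$, but since Meyer wavelets allow any $\gamma>0$ in \eqref{4.4}, the convergence condition $\gamma r-\omega'(1-r)>0$ can still be met. With that caveat your argument goes through. The trade-off is that the paper's citation to~[B] is shorter and avoids this bookkeeping, while your approach keeps everything within the framework of Section~2.
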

The ``if" parts of this theorem were proved in \cite{B}. To show the ``only if" parts, it suffices to consider $1<p,q<\infty$ and $-1<s<1.$ To this end, we first recall the wavelet basis on $R^n$ given in \cite{M}.
\begin{theorem}
There exist $2^n - 1$ functions $\psi_1, \cdot\cdot\cdot, \psi_q$ having the following two properties:
$$|\partial^\alpha\psi_i(x)|\leq C_N(1+|x|)^{-N}$$
for every multi-index $\alpha\in \mathbb N^n$ such that $|\alpha|\leq r,$ each $x\in \mathbb R^n$ and every $N\geq 1;$
$$\int x^\alpha \psi_i(x) dx=0,$$
for $|\alpha|\leq r$ and $1\leq i\leq 2^n-1.$ Moreover, the functions $2^{nj/2}\psi_i(2^jx-k), 1\leq i\leq q, k\in \mathbb Z^n, j\in \mathbb Z,$ form an orthonormal basis of $L^2(R^n).$
\end{theorem}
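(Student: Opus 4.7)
The plan is to construct the required system via Meyer's classical multiresolution analysis (MRA) in one dimension, followed by a tensor-product extension to $\mathbb{R}^n$. The resulting wavelets will actually lie in the Schwartz class with infinite vanishing moments, which is strictly stronger than the stated conditions.

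First, in one dimension, I would fix a smooth even bump $\widehat{\varphi} \in C_c^\infty(\mathbb{R})$ with support in $[-4\pi/3, 4\pi/3]$, identically $1$ on $[-2\pi/3, 2\pi/3]$, and satisfying $|\widehat{\varphi}(\xi)|^2 + |\widehat{\varphi}(\xi - 2\pi)|^2 = 1$ on the transition band $[2\pi/3, 4\pi/3]$. Because the translates $\widehat{\varphi}(\cdot + 2\pi k)$ have pairwise essentially disjoint supports, the transition identity upgrades to $\sum_k |\widehat{\varphi}(\xi + 2\pi k)|^2 \equiv 1$, which is the standard Plancherel-based criterion for orthonormality of the integer translates $\{\varphi(\cdot - k)\}_{k\in\mathbb{Z}}$. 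I then set $V_j := \overline{\mathrm{span}\{\varphi_{j,k}\}_k}$ with $\varphi_{j,k}(x) = 2^{j/2}\varphi(2^j x - k)$, and verify the MRA axioms: the nesting $V_j \subset V_{j+1}$ follows from existence of the low-pass filter $m_0$ with $\widehat{\varphi}(2\xi) = m_0(\xi)\widehat{\varphi}(\xi)$ (guaranteed by the support condition), while the density of $\bigcup_j V_j$ and triviality of $\bigcap_j V_j$ are immediate from the properties of $\widehat{\varphi}$.

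Next, I define the one-dimensional mother wavelet by $\widehat{\psi}(2\xi) := e^{i\xi}\, \overline{m_0(\xi + \pi)}\, \widehat{\varphi}(\xi)$. The quadrature-mirror-filter identity $|m_0(\xi)|^2 + |m_0(\xi + \pi)|^2 = 1$ (itself a consequence of the orthonormality of the $\varphi$-translates) yields that $\{\psi_{j,k}\}_{k\in\mathbb{Z}}$ is an orthonormal basis of $W_j := V_{j+1} \ominus V_j$; combined with $L^2(\mathbb{R}) = \bigoplus_j W_j$, this gives that $\{\psi_{j,k}\}_{j,k\in\mathbb{Z}}$ is an orthonormal basis of $L^2(\mathbb{R})$. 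Since $\widehat{\psi}$ is smooth, compactly supported, and vanishes in a neighborhood of the origin (its support lies in $\{2\pi/3 \le |\xi| \le 8\pi/3\}$), $\psi$ is in the Schwartz class and $\int x^\alpha \psi(x)\,dx = (-i)^{|\alpha|}\partial^\alpha \widehat{\psi}(0) = 0$ for \emph{every} multi-index $\alpha$.

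For the $n$-dimensional extension, for each $\varepsilon \in \{0,1\}^n \setminus \{0\}$ I set $\psi_\varepsilon(x) := \prod_{i=1}^n \eta^{\varepsilon_i}(x_i)$ with $\eta^0 := \varphi$ and $\eta^1 := \psi$, producing exactly $2^n - 1$ functions. The decay bound $|\partial^\alpha \psi_\varepsilon(x)| \le C_N (1+|x|)^{-N}$ for all $|\alpha| \le r$ and all $N$ follows by the product rule from the Schwartz membership of $\varphi$ and $\psi$. Every vanishing-moment condition $\int x^\alpha \psi_\varepsilon(x)\,dx = 0$ follows from Fubini since $\varepsilon \ne 0$ forces at least one factor to contribute $\int x_i^{\alpha_i}\psi(x_i)\,dx_i = 0$. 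Finally, the orthonormal basis property is the tensor-product MRA decomposition $V_{j+1}^{\otimes n} \ominus V_j^{\otimes n} = \bigoplus_{\varepsilon \ne 0} (U_{\varepsilon_1}\otimes \cdots \otimes U_{\varepsilon_n})$ with $U_0 = V_j$ and $U_1 = W_j$ in each factor, whose orthonormal basis is precisely $\{2^{nj/2}\psi_\varepsilon(2^j x - k) : \varepsilon \ne 0,\, k \in \mathbb{Z}^n\}$; summing over $j$ and using $L^2(\mathbb{R}^n) = \bigoplus_j (V_{j+1}^{\otimes n} \ominus V_j^{\otimes n})$ completes the proof. The main technical obstacle is the one-dimensional verification of the orthonormality across scales and completeness of the $\psi_{j,k}$ in $W_j$, which rests on the QMF identity and careful $2\pi$-periodization on the Fourier side; once this 1D step is in hand, the decay, cancellation, and $n$-dimensional orthonormality all follow by soft arguments.
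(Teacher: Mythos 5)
Your construction is correct, but it is worth pointing out that the paper does not prove this statement at all: it is quoted verbatim from Meyer's book \cite{M} as background for the weighted application, so there is no ``paper's proof'' to compare against line by line. What you have written is the standard Meyer-wavelet argument (Fourier-side MRA with a smooth compactly supported $\widehat{\varphi}$, the low-pass filter $m_0$, the QMF identity, the mother wavelet $\widehat{\psi}(2\xi)=e^{i\xi}\overline{m_0(\xi+\pi)}\widehat{\varphi}(\xi)$, and the tensor-product splitting $V_{j+1}^{\otimes n}\ominus V_j^{\otimes n}=\bigoplus_{\varepsilon\neq 0}U_{\varepsilon_1}\otimes\cdots\otimes U_{\varepsilon_n}$), and each step you invoke is the standard one: the periodization identity $\sum_k|\widehat{\varphi}(\xi+2\pi k)|^2\equiv 1$ does follow from the transition-band relation because the support condition leaves at most two nonvanishing terms, and the Schwartz membership of $\psi$ together with the vanishing of $\widehat{\psi}$ near the origin gives all moments zero. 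Note that the theorem as stated (decay and cancellation only up to a finite order $r$) is phrased to also cover Meyer's spline or Daubechies-type bases of finite regularity; your construction delivers the $r=\infty$ case, which is strictly stronger and entirely sufficient for the use the paper makes of it (testing the embedding against a single $\psi_i(2^j\cdot-k)$ to extract the lower bound on $w(Q(j,k))$). The one place where a referee would ask for more detail is the claim that density of $\bigcup_j V_j$ and triviality of $\bigcap_j V_j$ are ``immediate''; they are standard consequences of $\widehat{\varphi}$ being bounded, compactly supported, and equal to $1$ near the origin, but you should either cite that lemma or sketch the two-line Fourier argument.
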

For each $j\in \mathbb Z,$ and $k\in \mathbb Z^n,$ let $Q(j,k)$ denote the dyadic cube defined by $2^jx-k\in [0, 1)^n.$ Now we introduce the sequence spaces as follows.
\begin{definition}\label{BT2}
We say that a sequence $\{\lambda_k^j\}$ belongs to $\dot{b}^{s,q}_{p,w}$ if
    $$\|\{\lambda_k^j\}\|_{\dot{{b}}^{s, q}_{p,w}} :=\Big\{\sum_{j\in\mathbb{Z}}\delta^{-ksq}
                        \bigg[ \sum_{ k\in \mathbb Z^n} \Big( w( Q(j,k))^{1/p-1/2} |\lambda_k^j| \Big)^p \bigg]^{q/p}\Big\}^{1/q}<\infty$$
    and a sequence $\{\lambda_k^j\}_{j\in \mathbb{ Z}^+,{k\in\mathbb{Z^n}}}$ belongs to ${b}^{s,q}_{p,w}$ if
    $$\|\{\lambda_k^j\}\|_{{{ b}}^{s, q}_{p,w}} :=\Big\{\sum_{j\in\mathbb{Z}^+}\delta^{-ksq}
                        \bigg[ \sum_{ k\in \mathbb Z^n} \Big( w( Q(j,k ))^{1/p-1/2} |\lambda_k^j| \Big)^p \bigg]^{q/p}\Big\}^{1/q}<\infty,$$
where $Q(j,k)$ are dyadic cubes in $\mathbb R^n.$

A sequence $\{\lambda_k^j\}_{j\in \mathbb Z, k\in\mathbb{Z^n}}$ belongs to $\dot{f}^{s,q}_{p,w}$ if
             $$\|\{\lambda_k^j\}\|_{{\dot {f}}^{s, q}_{p,w}} :=\Big\|\Big\{\sum_{j\in\mathbb{Z}}
                    \sum \limits_{k\in \mathbb Z^n}\delta^{-ksq}\Big(
        w( (Q(j,k))^{-1/2}|\lambda_k^j |
        \chi_{Q(j,k)}(x)\Big)^q\Big\}^{1/q}\Big\|_{L^p(X)}<\infty, $$
and a sequence $\{\lambda_k^j\}_{j\in \mathbb Z^+, k\in\mathbb{Z^n}}$ belongs to ${f}^{s,q}_{p,w}$ if
$$\|\{\lambda_k^j\}\|_{{{f}}^{s, q}_{p}} :=\Big\|\Big\{\sum_{j\in\mathbb{Z}^+}
                    \sum \limits_{k\in \mathbb Z^n}\delta^{-ksq}\Big(
        w( Q(j,k))^{-1/2}|\lambda_k^j |
        \chi_{Q(j,k)}(x)\Big)^q\Big\}^{1/q}\Big\|_{L^p(X)}<\infty.$$
\end{definition}
It is a routing business to verify that $f\in B^{s,q}_{p,w}$ if and only if $\{\langle f, 2^{nj/2}\psi_i(2^j\cdot-k)\rangle\}\in b_{s,q}^{p,w}, 1\leq i\leq 2^n-1.$ Therefore, if $ B_{s_0,q}^{p_0,w}\hookrightarrow B_{s_1,q}^{p_1,w}$ then $ b_{s_0,q}^{p_0,w}\hookrightarrow  b_{s_1,q}^{p_1,w}.$ By Theorem \ref{embedding}, $w(B(x,r))\geq cr^n$ for all $x\in \mathbb R^n$ and $0<r\leq 1.$ The other proofs are similar and we omit the details.

We would like to point out that, in general, the $A_p$ condition can not imply the lower bound property. To see this, let
$w_{\alpha,\beta}(x)$ equal $|x|^\alpha$ if $|x|\leq 1$ and $|x|^\beta$ if $|x|>1.$
It is easy to check that if $ -n<\beta<\alpha < n(p-1),$ then $w_{\alpha,\beta}\in A_p.$ However, the following inequality
$$ w(B(x,r))\geq Cr^n $$
can not hold for any fixed constant $C.$

\bigskip

\noindent {\bf Acknowledgement: } The third author would like to thank Professor Thierry Coulhon for drawing author's attention to \cite{CKP} and \cite{SC} and Professor Qui Bui for interesting the lower bound property for the weighted Besov and Triebel-Lizorkin spaces on $\mathbb R^n.$


\bigskip

\noindent School  of Mathematic Sciences, South China Normal University, Guangzhou, 510631, P.R. China.

\noindent {\it E-mail address}:
\texttt{20051017@m.scnu.edu.cn}

\medskip

\noindent Department of Mathematics, Auburn University, AL
36849-5310, USA.

\noindent {\it E-mail address}: \texttt{hanyong@auburn.edu}

\medskip

\noindent Department of Mathematics, Macquarie University, NSW, 2109, Australia.

\noindent {\it E-mail address}: \texttt{ji.li@mq.edu.au}

\end{document}